\documentclass[10pt,a4paper,oneside,english]{amsart}
\usepackage[T1]{fontenc}
\usepackage[latin9]{inputenc}
\usepackage{units}
\usepackage{amsthm}
\usepackage{amssymb}

\makeatletter


\numberwithin{equation}{section}
\numberwithin{figure}{section}
\theoremstyle{plain}
\newtheorem{thm}{\protect\theoremname}[section]
  \theoremstyle{plain}
  \newtheorem{lem}[thm]{\protect\lemmaname}
  \theoremstyle{plain}
  
  \theoremstyle{remark}
  \newtheorem{rem}[thm]{\protect\remarkname}
  \theoremstyle{plain}
  
  \newtheorem{fact}[thm]{Fact}
\@ifundefined{date}{}{\date{}}

\usepackage{amsthm}

\usepackage{cite}

\newtheorem{theorem}{Theorem}[section]

\newtheorem{dfn}[theorem]{Definition}
\newtheorem{exa}[theorem]{Example}

%



%

\providecommand{\lemmaname}{Lemma}
  \providecommand{\propositionname}{Proposition}
  \providecommand{\remarkname}{Remark}
\providecommand{\theoremname}{Theorem}


\usepackage{babel}
\providecommand{\lemmaname}{Lemma}
  \providecommand{\propositionname}{Proposition}
  \providecommand{\remarkname}{Remark}
\providecommand{\theoremname}{Theorem}

\usepackage{babel}
\providecommand{\corollaryname}{Corollary}
  \providecommand{\lemmaname}{Lemma}
  \providecommand{\propositionname}{Proposition}
  \providecommand{\remarkname}{Remark}
\providecommand{\theoremname}{Theorem}

\makeatother

\usepackage{babel}
  \providecommand{\corollaryname}{Corollary}
  \providecommand{\lemmaname}{Lemma}
  \providecommand{\propositionname}{Proposition}
  \providecommand{\remarkname}{Remark}
\providecommand{\theoremname}{Theorem}

\begin{document}

\title [Weak$^*$-fpp  in $\ell_1$ and Polyhedrality in Lindenstrauss Spaces] {Weak$^*$ Fixed Point Property  in $\ell_1$ and Polyhedrality in Lindenstrauss Spaces}

\author[E. Casini]{Emanuele Casini}
\address{\textsc{Emanuele Casini}, \rm{Dipartimento di Scienza e Alta Tecnologia, Universit\`{a} dell'Insubria,
via Valleggio 11, 22100 Como, Italy.}}
\email{emanuele.casini@uninsunbria.it}

\author[E. Miglierina]{Enrico Miglierina}
\address{\textsc{Enrico Miglierina}, \rm{Dipartimento di Discipline Matematiche, Finanza Matematica ed Econometria,
Universit\`{a} Cattolica del Sacro Cuore, Via Necchi 9, 20123 Milano,
Italy.}}
\email{enrico.miglierina@unicatt.it}

\author[\L. Piasecki]{\L ukasz Piasecki}
\address{\textsc{\L ukasz Piasecki}, \rm{Instytut Matematyki,
Uniwersytet Marii Curie-Sk{\l}odowskiej, Pl. Marii Curie-Sk{\l}odowskiej 1, 20-031 Lublin, Poland.}}
\email{piasecki@hektor.umcs.lublin.pl}

\author[R. Popescu]{Roxana Popescu}
\address{\textsc{Roxana Popescu}, \rm{Department of Mathematics, University of Pittsburgh, Pittsburgh, PA 15260,
	USA.}}
\email{rop42@pitt.edu}

\thanks{\textbf{Acknowledgements}: The authors thank Stanis{\l}aw Prus for helpful conversations on and around the
article \cite{Japon-Prus2004}. They also thank Libor Vesel\'{y} for useful discussions.}

\numberwithin{equation}{section}

\begin{abstract}
The aim of this paper is to study the $w^*$-fixed point property for nonexpansive mappings in the duals of separable Lindenstrauss spaces by means of suitable geometrical properties of the dual ball. First we show that a property concerning the behaviour of a class of $w^*$-closed subsets of the dual sphere is equivalent to the   $w^*$-fixed point property. Then,
the main result of our paper shows an equivalence between another, stronger geometrical property of the dual ball and the
stable $w^*$-fixed point property. The last geometrical notion was introduced by Fonf and Vesel\'{y} as a strengthening of the notion of polyhedrality.
In the last section we show that also the first geometrical assumption that we have introduced can be related to a polyhedral concept for the predual space. Indeed, we give a hierarchical structure among various polyhedrality notions in the framework of Lindenstrauss spaces. Finally, as a by-product, we obtain an improvement of an old result about the norm-preserving compact extension of compact operators.
\end{abstract}

\subjclass[2010]{47H10, 46B45, 46B25}

\keywords{$w^*$-fixed point property, stability of the $w^*$-fixed point property, Lindenstrauss spaces, Polyhedral spaces, $\ell_1$ space, Extension of compact operators}

\maketitle

\section{Introduction and Preliminaries}
Let $X$ be an infinite dimensional real Banach space and let us denote by $B_X$ its closed unit
ball and by $S_X$ its unit sphere.
We say that a Banach space $X$ is a Lindenstrauss space if its dual is a space $L_1(\mu)$ for some measure $\mu$. 
A nonempty bounded closed and convex subset $C$ of $X$ has the fixed point property
(shortly, fpp) if each nonexpansive mapping (i.e., the mapping $T:C\rightarrow C$ such that $\|T(x)-T(y)\|\leq \|x-y\|$ for all $x,y\in C$) has a fixed point.
A dual space $X^*$ is said to have the $\sigma(X^*,X)$-fixed
point property ($\sigma(X^*,X)$-fpp) if every nonempty, convex,
$w^*$-compact subset $C$ of $X^*$ has the fpp. 

The study of the
$\sigma(X^*,X)$-fpp reveals to be of special interest whenever a
dual space has different preduals. 
For
instance, this situation occurs when we consider the space $\ell_1$
and its preduals $c_0$ and $c$ where it is well-known (see
\cite{Karlovitz1976}) that $\ell_1$ has the $\sigma(\ell_1,c_0)$-fpp
whereas it lacks the $\sigma(\ell_1,c)$-fpp.
The first result of our paper is devoted to a geometrical characterization of the preduals $X$ of $\ell_1$ that induce on $\ell_1$ itself the $\sigma(\ell_1,X)$-fpp. This theorem can be seen as an extension of the characterization given in Theorem 8 in \cite{Japon-Prus2004} and it is based on the studies carried out in \cite{Casini-Miglierina-Piasecki2015}.
However, the main purpose of the present paper is to investigate the stability of $\sigma(\ell_1,X)$-fpp.
Generally speaking, stability of fixed point property deals with the following question: let us suppose that a Banach space $X$ has the fixed point property and $Y$ is a Banach space isomorphic to $X$ with "small" Banach-Mazur distance, does $Y$ have fixed point property? This problem has been widely studied for fpp and only occasionally for $w^*$-topology (see \cite{Soardi,Dominguez-Garcia-Japon1998}).
It is worth pointing out that the stability property of $w^*$-fpp previously studied in the literature considers the renormings of $X^*$ whereas it maintains the original $w^*$-topology on the renormed space. Since a renorming of a given Banach space $X^*$ not necessarily is a dual space, we prefer to introduce a more suitable notion of stability for $w^*$-fpp that takes into account each dual space with the proper $w^*$-topology induced by its predual (see Definition \ref{Def Stable fpp}).
In Section \ref{Sec:Stablefpp} we prove that the stability of  $\sigma(\ell_1,X)$-fpp is equivalent to a suitable property concerning the behaviour of $w^*$-limit points of the set of extreme points of $B_{\ell_1}$. 
It is worth to mention that the property playing a key role in Section \ref{Sec:Stablefpp} was already introduced in \cite{Fonf-Vesely2004} by Fonf and Vesel\'{y}, in the completely different setting of polyhedral spaces theory.


The concept of polyhedrality, originally introduced by Klee in \cite{Klee1960}, is widely studied (for detailed survey about various definitions of polyhedrality for infinite dimensional spaces see \cite{Durier-Papini1993,Fonf-Vesely2004}) and it gives a deep insight of geometrical properties of Banach spaces. Beyond its intrinsic interest, polyhedrality has some important applications. For instance, in the framework of Lindenstrauss spaces, it is related to the existence of norm-preserving compact extension of compact operators (see \cite{Casini-Miglierina-Piasecki-Vesely2016} and \cite{Fonf-Lindenstrauss-Phelps2001}). In the last section of the paper we compare the geometrical assumptions used to study $\sigma(\ell_1,X)$-fpp with the main generalizations of polyhedrality already considered in the literature. 
Indeed, the assumption characterizing the preduals of $\ell_1$ satisfying stable $w^*$-fpp is listed as a generalization of polyhedrality in \cite{Fonf-Vesely2004} and we show how the property ensuring the validity of $\sigma (\ell_1,X)$-fpp fits very well in a list of several properties related to the original  definition of polyhedral space. Our results, in the framework of Lindenstrauss spaces, prove that the notions of polyhedrality play an important role also in fixed point theory. We also give a hierarchical structure among various notions of polyhedrality by restricting our attention to Lindenstrauss spaces. These results allow us to prove a new version of an old uncorrect result (Theorem 3 in \cite{Lazar1969})  concerning the norm-preserving compact extension of compact operators.
    
Finally, we collect some notations. If $A\subset X$, then we denote by $\overline{A}$, $\mathrm{conv}(A)$ and $\mathrm{ext}\,A$ the norm closure of $A$, the convex hull of $A$ and the set of the extreme points of $A$  respectively. Moreover, whenever $A \subset X^*$, we denote by $\overline{A}^*$ the $w^*$-closure of $A$.
For $x\in S_X$, we call $D(x)$ the image of $x$ by the duality mapping, i.e., 
$$
D(x)=\left\lbrace x^*\in S_{X^*}:x^*(x)=1\right\rbrace. 
$$


\section{A characterization of $w^*$-fixed point property in $\ell_1$}

The aim of this brief section is to characterize the separable Lindenstrauss spaces $X$ such that $X^*$ has the $\sigma(X^*,X)$-fpp. The present result adds a new characterization of $\sigma (X^*,X)$-fpp to those listed in \cite{Casini-Miglierina-Piasecki2015}. Moreover, the theorem sheds some new light on the relationships between $\sigma (X^*,X)$-fpp and a geometrical feature of the sphere in $X^*$. We will show in Section \ref{sec:poly} that this feature can be interpreted as a polyhedrality requirement on $X$.
\begin{thm}\label{Theorem fpp}
Let $X$ be a separable Lindenstrauss space. Then the following are equivalent.
\begin{enumerate}
  \item[(i)] $X^*$ has the $\sigma(X^*,X)$-fpp;
  \item[(ii)] there is no infinite set $C \subset \mathrm{ext} \, B_{X^*}$ such that $\overline{\mathrm{conv} (C)}^* \subset S_{X^*}$.
\end{enumerate}
\end{thm}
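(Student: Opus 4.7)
My plan is to prove the equivalence by contrapositives in both directions, combining three ingredients: (a) the fact that $X^*$ is an $L_1(\mu)$-space (the non-trivial case being $X^* \cong \ell_1$, since otherwise $\mathrm{ext}\,B_{X^*}$ is empty and (ii) holds vacuously), so that $\mathrm{ext}\,B_{X^*}$ is a countable symmetric set of ``basis-like'' vectors; (b) $w^*$-metrizability of $B_{X^*}$, which follows from separability of $X$; and (c) the characterizations of $\sigma(X^*,X)$-fpp obtained in \cite{Casini-Miglierina-Piasecki2015}, which the present theorem is meant to augment.

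For the implication $\lnot\text{(ii)}\Rightarrow \lnot\text{(i)}$, I start from an infinite $C \subset \mathrm{ext}\,B_{X^*}$ with $\overline{\mathrm{conv}(C)}^* \subset S_{X^*}$. Using $w^*$-metrizability I extract a sequence $\{x_n^*\} \subset C$ of pairwise distinct extreme points $w^*$-convergent to some $x^* \in B_{X^*}$; I may also assume (by passing to a further subsequence and, if necessary, replacing some $x_n^*$ by $-x_n^*$ using the fact that $\mathrm{conv}(C) \not\ni 0$ since $K \subset S_{X^*}$) that the $x_n^*$'s have pairwise disjoint supports in $\ell_1$. The set $K := \overline{\mathrm{conv}(\{x_n^* : n \in \mathbb{N}\})}^*$ is $w^*$-compact, convex, and, by hypothesis, entirely contained in $S_{X^*}$. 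On $K$ I build a nonexpansive self-map without fixed point using a shift-type construction along $\{x_n^*\}$: the fact that $K \subset S_{X^*}$ guarantees that all convex combinations preserve the $\ell_1$-norm of their coefficients (no cancellation), so the shift is an isometry, and iterating it pushes mass arbitrarily far along the sequence, precluding any fixed point. This realizes failure of $\sigma(X^*,X)$-fpp.

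For the converse $\lnot\text{(i)}\Rightarrow \lnot\text{(ii)}$ I invoke one of the characterizations of $\sigma(X^*,X)$-fpp from \cite{Casini-Miglierina-Piasecki2015}, which provides an asymptotically isometric $\ell_1$-structure inside $B_{X^*}$ persisting under $w^*$-limits. Using the canonical representation of any $\ell_1$-element as a norm-one series over $\mathrm{ext}\,B_{\ell_1}$, I refine this bad sequence into an infinite family $C \subset \mathrm{ext}\,B_{X^*}$ by selecting, along the sequence, a ``dominant'' extreme-point coordinate; the quantitative asymptotic-isometry estimates should then force the $w^*$-closed convex hull of $C$ to lie inside $S_{X^*}$, by preventing any norm drop when taking $w^*$-limits of convex combinations.

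The main obstacle I expect is this last direction: the hypothesis from \cite{Casini-Miglierina-Piasecki2015} is phrased in terms of general normalized sequences, so the genuine content lies in extracting an extreme-point refinement for which the no-norm-drop property for $w^*$-limits of convex combinations can be verified. In the reverse direction, the technical delicacy is to check that the shift map is well-defined and $w^*$-continuous on all of $K$ (not only on the dense set of finite convex combinations); this will use linear independence of distinct elements of $\mathrm{ext}\,B_{\ell_1}$ together with the $w^*$-continuity of the coordinate functionals on $\ell_1$.
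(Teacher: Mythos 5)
Your reduction to the case $X^*\cong\ell_1$ rests on a false claim, and this is not a cosmetic slip: you assert that if $X^*$ is not $\ell_1$ then $\mathrm{ext}\,B_{X^*}$ is empty, so that (ii) holds vacuously. By the Krein--Milman theorem the $w^*$-compact convex set $B_{X^*}$ always has extreme points, so $\mathrm{ext}\,B_{X^*}$ is never empty; and, more seriously, if (ii) did hold for a separable Lindenstrauss space with nonseparable dual, the theorem itself would be false, since such spaces always \emph{fail} the $w^*$-fpp (Corollary 3.4 in \cite{Casini-Miglierina-Piasecki2015}). What the paper actually does in this case is show that (ii) \emph{fails} too, using the Lazar--Lindenstrauss representation (Theorem 2.3 in \cite{Lazar-Lindenstrauss1971}) together with Theorem 2.1 in \cite{Casini-Miglierina-Piasecki-Vesely2016} to produce an infinite set of extreme points whose $w^*$-closed convex hull lies in the sphere. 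Your argument, as written, proves the opposite of what is needed in this case, so the reduction must be repaired before anything else.

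In the separable-dual case your direction $\lnot\mathrm{(ii)}\Rightarrow\lnot\mathrm{(i)}$ is essentially the paper's: extract a $w^*$-convergent constant-sign subsequence of $\{\pm e^*_n\}$ and run a shift map. Two corrections there: flipping \emph{individual} signs ($x_n^*\mapsto -x_n^*$) destroys the hypothesis $\overline{\mathrm{conv}(C)}^*\subset S_{X^*}$ — you must pass to an infinite subsequence of constant sign (a subset of $C$, so the hull stays in the sphere) and, if needed, negate the whole set; and $w^*$-continuity of the shift is a red herring (nonexpansive maps need not be $w^*$-continuous; you only need the shift to be a nonexpansive self-map of the $w^*$-compact convex set). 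The genuinely needed fact, which you do not address, is that $\overline{\mathrm{conv}(C)}^*$ equals the \emph{norm}-closed convex hull of $\{e^*\}\cup\{e^*_{n_k}\}$, where $e^*$ is the $w^*$-limit (Corollary 2 in \cite{Japon-Prus2004}), so that every point of $K$ has a unique $\ell_1$-representation on which the shift acts — note the shift must also be defined at $e^*$ itself. Finally, your direction $\lnot\mathrm{(i)}\Rightarrow\lnot\mathrm{(ii)}$ is a plan, not a proof: you openly defer the ``dominant extreme-point coordinate'' extraction and the no-norm-drop verification. The paper closes exactly this gap by citing Theorem 4.1 in \cite{Casini-Miglierina-Piasecki2015}, which says that failure of the $\sigma(\ell_1,X)$-fpp already yields a subsequence $\{e^*_{n_k}\}$ of the standard basis, $w^*$-convergent to some $e^*\in S_{\ell_1}$ with $e^*(n_k)\geq 0$; the nonnegativity on the support indices is precisely what forbids cancellation in convex combinations $\alpha_0 e^*+\sum_k\alpha_k e^*_{n_k}$ and forces the hull into the sphere. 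Without that statement (or a proof of an equivalent extraction), this direction remains open in your write-up.
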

\begin{proof}
Let us suppose that $X$ is a separable Lindenstrauss space with nonseparable dual. Then by Corollary 3.4 in \cite{Casini-Miglierina-Piasecki2015} $X$ fails the $w^*$-fpp and it also fails property (ii) by Theorem 2.3 in \cite{Lazar-Lindenstrauss1971} and Theorem 2.1 in \cite{Casini-Miglierina-Piasecki-Vesely2016}. Therefore we limit ourselves to consider the case where $X^*$ is isometric to $\ell_1$. Let us suppose that $X^*$ fails the $w^*$-fixed point property. Then, by Theorem 4.1 in \cite{Casini-Miglierina-Piasecki2015}, there exist a subsequence $\left\lbrace e^*_{n_k}\right\rbrace $ of the standard basis of $\ell_1$ and a point $e^*\in S_{\ell_1}$ such that $e^*_{n_k}\xrightarrow{\sigma(\ell_1,X)}e^*$ and $e^*(n_k)\geq 0$. By taking $C=\left\lbrace e^*_{n_k}\right\rbrace$, it is easy to see that $X$ does not satisfy property (ii).
On the other hand, without loss of generality we can assume that there exists a set $C=\left\lbrace e^*_{n_k}\right\rbrace$ where $\left\lbrace e^*_{n_k}\right\rbrace$ is a $w^*$-convergent subsequence of the standard basis of $\ell_1$. Let us denote by $e^*$ the $w^*$-limit of $\left\lbrace e^*_{n_k}\right\rbrace$ then, by recalling Corollary 2 in \cite{Japon-Prus2004}, we have 
\[
\overline{\mathrm{conv}(C)}^*=\overline{\mathrm{conv} \, \left(\left\lbrace e^*, e^*_{n_1},e^*_{n_2}, \cdots \right\rbrace  \right) } \subset S_{\ell_1}.
\]
By adapting to our setting the method developed in the last part of the proof of Theorem 8 in \cite{Japon-Prus2004}, we easily find a nonexpansive mapping fixed point free from $\overline{\mathrm{conv}(C)}^*$ to $\overline{\mathrm{conv}(C)}^*$. 
\end{proof}


\section{A characterization of stable $w^*$-fixed point property in $\ell_1$}\label{Sec:Stablefpp}

This section is devoted to the main result of the paper. We characterize the stable $w^*$-fpp for the duals of separable Lindenstrauss spaces by means of a suitable property describing the interplay between $w^*$-topology and the geometry of the dual ball $B_{X^*}$.
To our knowledge, the first result dealing with the stability of $w^*$-fpp for this class of spaces is the following:

\begin{thm}(\cite{Soardi})
Let $Y$ be a Banach space such that $d(\ell_1,Y)<2$. Then $Y$ has the $w^*$-fpp.
\end{thm}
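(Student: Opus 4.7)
The plan is to reduce the problem to a Goebel-Karlovitz style contradiction inside $\ell_1$. Since $d(\ell_1,Y)<2$, after rescaling an isomorphism $J\colon \ell_1\to Y$ one may transport the norm of $Y$ to $\ell_1$, obtaining an equivalent norm $|\cdot|$ with $\|\cdot\|_1 \le |\cdot| \le \lambda\,\|\cdot\|_1$ for some $\lambda<2$. The $w^*$-topology on $Y$ comes from the predual $(J^*)^{-1}(c_0)$ and is identified through $J$ with the standard $w^*$-topology on $\ell_1$ induced by $c_0$. The problem becomes: show that $(\ell_1,|\cdot|)$, with this $w^*$-topology, has the $w^*$-fpp.

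Suppose for contradiction that there exist a nonempty $w^*$-compact convex $C\subset \ell_1$ and a $|\cdot|$-nonexpansive map $T\colon C\to C$ with no fixed point. Zorn's lemma yields a nonempty $w^*$-compact convex $T$-invariant $K\subseteq C$ that is minimal among such sets, and necessarily $d:=|\cdot|\text{-diam}(K)>0$. Using that $c_0$ is separable (so $B_{\ell_1}$ is $w^*$-metrizable) and that the dual norm $|\cdot|$ is $w^*$-lower semicontinuous, one obtains a $|\cdot|$-approximate fixed point sequence $\{x_n\}\subset K$. Passing to a $w^*$-convergent subsequence, write $x_n \xrightarrow{w^*} x\in K$. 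The $w^*$-variant of the Goebel-Karlovitz lemma, proved by the usual asymptotic-centre argument adapted to $w^*$-compact sets, then yields
\[
\lim_n |x_n-y| \;=\; d \qquad \text{for every } y\in K.
\]

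To finish I would exploit the rigidity of $\ell_1$ under $w^*$-null perturbations: if $u_n \xrightarrow{w^*}0$ in $\ell_1$ with $\|u_n\|_1\to r$, then $\|v+u_n\|_1 \to \|v\|_1 + r$ for every $v\in \ell_1$ (a coordinate-wise computation reflecting that the mass of $u_n$ escapes to infinity). Setting $w_n:=x_n-x$, we have $w_n \xrightarrow{w^*}0$ and, after a further subsequence, $\|w_n\|_1\to r$. Taking $v=-w_m$ and using $\|\cdot\|_1 \le |\cdot|$ together with $|x_n-x_m|\to d$, we get $\|w_m\|_1 + r\le d$ for every $m$; letting $m\to\infty$ yields $2r\le d$. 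On the other hand, $|\cdot|\le \lambda\,\|\cdot\|_1$ combined with $|w_n|\to d$ forces $d\le \lambda r$. Putting these together, $\lambda\ge 2$, contradicting $\lambda<2$.

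The main obstacle is the $w^*$-version of the Goebel-Karlovitz lemma. Its proof mirrors the classical one: one shows that the asymptotic centre of $\{x_n\}$ in $K$ is $w^*$-compact, convex, and $T$-invariant by nonexpansiveness, whence minimality of $K$ forces every sub-level set of $y\mapsto \limsup_n|x_n-y|$ to equal $K$. The delicate point is to verify that the relevant level sets are $w^*$-closed, which rests on the $w^*$-lower semicontinuity of the dual norm $|\cdot|$. The remaining ingredients -- the norm identification, the $\ell_1$-additivity on $w^*$-null sequences, and the final numerical squeeze -- are short and routine.
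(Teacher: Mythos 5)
There is a genuine gap, and it sits exactly where you flagged it: the ``$w^*$-variant of the Goebel--Karlovitz lemma'' is not available in this setting, and the sketch you give for it does not go through. Two concrete obstructions. First, your transported norm $\left|\cdot\right|$ need not be $\sigma(\ell_1,c_0)$-lower semicontinuous: $Y$ is an arbitrary Banach space close to $\ell_1$, not necessarily a dual space, so the $\left|\cdot\right|$-unit ball need not be $w^*$-closed (this is precisely the point the paper makes in the remark following the theorem, and it is why one cannot speak of ``the predual $(J^*)^{-1}(c_0)$'' of $Y$ --- the topology is simply transported). Without $w^*$-lsc of $\left|\cdot\right|$ even the diametrality of minimal sets breaks down. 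Second, and more fundamentally, even if $\left|\cdot\right|$ were $w^*$-lsc, the classical proof does not ``mirror'': the sub-level sets of $y\mapsto\limsup_n\left|x_n-y\right|$ are convex and norm-closed, and in the weakly compact setting one concludes via Mazur's theorem that they are weakly closed, hence weakly compact, contradicting minimality. There is no $w^*$-analogue of Mazur: a norm-closed convex $T$-invariant proper subset of a minimal $w^*$-compact $K$ need not be $w^*$-closed (a $\limsup$ is a decreasing infimum of $w^*$-lsc functions, which is not $w^*$-lsc), so minimality yields no contradiction. This failure is not a technicality one can patch by ``verifying'' lsc of the level sets; it is the reason the minimal-invariant-set machinery is avoided altogether in this context.

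For comparison, the actual argument (Soardi's, adapted in the proof of (ii)$\Rightarrow$(i) of Theorem \ref{Theorem stable fpp}, which contains the present statement as the case $r=0$, $\gamma=2$) replaces the Goebel--Karlovitz lemma by the constructive Lemma \ref{lemma Soardi}: for any $x^*\in K$ there is a closed convex $T$-invariant $H(x^*)\subset K$ with $\mathrm{diam}\,H(x^*)\le\sup_n\left\|x^*-T^nx^*\right\|$ and $\sup_{y^*\in H(x^*)}\left\|x^*-y^*\right\|\le 2\sup_n\left\|x^*-T^nx^*\right\|$. Note that $H(x^*)$ is only norm-closed --- no $w^*$-compactness or minimality is ever invoked. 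One then iterates: inside each $H(x^*_n)$ take an approximate fixed point sequence, pass to a $w^*$-convergent subsequence, and use the $\ell_1$-additivity estimates (your final squeeze is, in substance, Lemma \ref{Lemma sequences}(a)--(c) with $r=0$, and that part of your proposal is correct) to get $\alpha_{n+1}\le\frac{1}{2}\left\|A\right\|\left\|A^{-1}\right\|\alpha_n$ for the defects $\alpha_n$; when $d(\ell_1,Y)<2$ these decay geometrically and the $x^*_n$ norm-converge to a fixed point. So the repair is not to prove your $w^*$-Goebel--Karlovitz lemma (which you should not expect to hold for an arbitrary equivalent, possibly non-dual, norm) but to swap the minimal-set/asymptotic-center frame for Soardi's invariant-set construction, keeping your additivity computation as the quantitative engine.
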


We remark that the statement of the previous theorem implicitly assume that $\ell_1$ is endowed with the $\sigma(\ell_1,c_0)$-topology. Moreover, since $Y$ is not necessarily a dual space, the author of \cite{Soardi} considers the original topology $\sigma(\ell_1,c_0)$ on $Y$. For this reason, this approach does not allow to consider a true $w^*$-fpp in the space $Y$. In order to avoid this undesirable feature we introduce a different definition of stability for the $w^*$-fpp. 

\begin{dfn}\label{Def Stable fpp}
A dual space $X^*$ enjoys the \textit{stable} $\sigma(X^*,X)$-fpp if there exists a real number $\gamma >1$ such that $Y^*$ has the $\sigma(Y^*,Y)$-fpp whenever $d(X,Y)< \gamma$, where $ d(X,Y)$ is the Banach-Mazur distance between $X$ and $Y$.
\end{dfn}

We recall that every nonseparable dual of a separable Lindenstrauss space fails the $w^*$-fpp (see Corollary 3.4 in \cite{Casini-Miglierina-Piasecki2015}). Therefore, in the sequel of this section, we restrict our attention to the preduals of $\ell_1$.
If $A \subset X^*$, then we denote by $A'$ the set of all $w^*$-limit points of $A$:
$$
A'=\left\lbrace x^*\in X^*:x^* \in \overline{(A\setminus \left\lbrace x^*\right\rbrace )}^*\right\rbrace.
$$

The following Lemma shows how the geometrical assumption that will play a crucial role in our characterization of stable $w^*$-fpp influences the behaviour of some sequences in $\ell_1$.

\begin{lem}\label{Lemma sequences}
Let $X$ be a predual of $\ell_1$.
\begin{enumerate}
 \item[(a)] For every sequence $\left\lbrace x_n^*\right\rbrace \subset \ell_1$ coordinatewise converging to $x^*_0$ and such that $\lim_{n\rightarrow \infty}\left\|x^*_n-x^*_0\right\|$ exists, it holds
 \begin{equation*}
 \lim_{m \rightarrow \infty}\lim_{n \rightarrow \infty} \left\|x^*_n-x^*_m\right\|= 2\lim_{n\rightarrow \infty} \left\|x^*_n-x^*_0\right\|.
 \end{equation*}
 \end{enumerate}

 If, in addition, $\left(\mathrm{ext} \, B_{\ell_1}\right)'\subset rB_{\ell_1}$ for some $0\leq r<1$, then
\begin{enumerate}
 \item[(b)] for every sequence $\left\lbrace x_n^*\right\rbrace \subset \ell_1$ such that $\left\lbrace x_n^*\right\rbrace$ is $\sigma(\ell_1,X)$-convergent to $x^*$ and $\left\lbrace x_n^*\right\rbrace$ tends to $0$ coordinatewise, it holds
 \begin{equation*}
 \left\|x^*\right\|\leq r \liminf_{n\rightarrow \infty}\left\|x_n^*\right\|;
 \end{equation*}

 \item[(c)] for every sequence $\left\lbrace x_n^*\right\rbrace \subset \ell_1$ such that $\left\lbrace x_n^*\right\rbrace$ is $\sigma(\ell_1,X)$-convergent to $x^*$, up to a subsequence, it holds
 \begin{equation*}
  \lim_{n\rightarrow \infty}\left\|x^*_n-x^*\right\|\leq\dfrac{1+r}{2}\lim_{m\rightarrow \infty} \lim_{n\rightarrow \infty}\left\|x^*_n-x^*_m\right\|.
  \end{equation*}
\end{enumerate} 
\end{lem}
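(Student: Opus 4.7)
The plan is to prove the three parts in sequence, using the additivity of the $\ell_{1}$-norm on disjoint supports; the geometric hypothesis $(\mathrm{ext}\,B_{\ell_{1}})'\subset rB_{\ell_{1}}$ is only needed for (b), and (c) will follow from (a) and (b) by a coordinatewise extraction.

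For (a), set $u_{n}^{*}=x_{n}^{*}-x_{0}^{*}$, so $u_{n}^{*}\to 0$ coordinatewise and $L:=\lim_{n}\|u_{n}^{*}\|$ exists. The key auxiliary identity is
\[
\lim_{n}\|u_{n}^{*}-v\|=L+\|v\|\qquad\text{for every fixed }v\in\ell_{1}.
\]
It follows from a standard $\ell_{1}$-splitting: given $\varepsilon>0$, pick $K$ with $\sum_{k>K}|v(k)|<\varepsilon$; coordinatewise convergence yields $\sum_{k\leq K}|u_{n}^{*}(k)|<\varepsilon$ for $n$ large, and the contributions to $\|u_{n}^{*}-v\|$ from the two blocks $\{k\leq K\}$ and $\{k>K\}$ are within $O(\varepsilon)$ of $\|v\|$ and $\|u_{n}^{*}\|$ respectively by the triangle and reverse-triangle inequalities. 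Applying the identity with $v=u_{m}^{*}$ gives $\lim_{n}\|u_{n}^{*}-u_{m}^{*}\|=L+\|u_{m}^{*}\|$, and the outer limit $m\to\infty$ produces $2L$.

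For (b), I would first translate the abstract hypothesis on $(\mathrm{ext}\,B_{\ell_{1}})'$ into the pointwise estimate
\[
\limsup_{k}|f(k)|\leq r\qquad\text{for every }f\in B_{X},
\]
where $f(k)=\langle f,e_{k}^{*}\rangle$. Since $r<1$ and $\|\pm e_{k}^{*}\|=1$, no $\pm e_{k}^{*}$ can be a $w^{*}$-limit point of $\{\pm e_{k}^{*}\}$; hence any $w^{*}$-convergent subsequence of distinct $\pm e_{k}^{*}$ has its limit in $(\mathrm{ext}\,B_{\ell_{1}})'\subset rB_{\ell_{1}}$. If $\limsup_{k}|f(k)|>r$ for some $f\in B_{X}$, I would extract $k_{j}$ with $|f(k_{j})|>r+\delta$, set $\epsilon_{j}=\mathrm{sgn}\,f(k_{j})$, pass to a $w^{*}$-convergent sub-subsequence $\epsilon_{j_{i}}e_{k_{j_{i}}}^{*}\to y^{*}$ (here I use $w^{*}$-metrizability of $B_{\ell_{1}}$, i.e., separability of $X$), and obtain $|\langle y^{*},f\rangle|\geq r+\delta$, contradicting $\|y^{*}\|\leq r$. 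With this estimate in hand, I pass to a subsequence with $\|x_{n}^{*}\|\to L=\liminf_{n}\|x_{n}^{*}\|$, fix $\varepsilon>0$ and $K$ so that $|f(k)|\leq r+\varepsilon$ for $k>K$, and bound
\[
|\langle f,x_{n}^{*}\rangle|\leq\sum_{k\leq K}|x_{n}^{*}(k)|\,|f(k)|+(r+\varepsilon)\|x_{n}^{*}\|.
\]
The first sum tends to $0$ by coordinatewise convergence; letting $n\to\infty$, then $\varepsilon\to 0$, and taking $\sup_{f\in B_{X}}$ yields $\|x^{*}\|\leq rL$.

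For (c), write $u_{n}^{*}=x_{n}^{*}-x^{*}$, so $u_{n}^{*}\xrightarrow{w^{*}}0$. I would pass to a subsequence along which $\lim_{n}\|u_{n}^{*}\|$ and $M:=\lim_{m}\lim_{n}\|u_{n}^{*}-u_{m}^{*}\|$ both exist and, by a diagonal extraction, $u_{n}^{*}(k)\to c(k)$ for every $k$; Fatou gives $c\in\ell_{1}$. Setting $v_{n}^{*}:=u_{n}^{*}-c$, we have $v_{n}^{*}\to 0$ coordinatewise and $v_{n}^{*}\xrightarrow{w^{*}}-c$. Part (a), applied to $\{u_{n}^{*}\}$ (which is coordinatewise convergent to $c$), delivers $M=2\lim_{n}\|v_{n}^{*}\|$, and part (b), applied to $\{v_{n}^{*}\}$, gives $\|c\|\leq rM/2$. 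The triangle inequality $\|u_{n}^{*}\|\leq\|v_{n}^{*}\|+\|c\|$ then produces $\lim_{n}\|u_{n}^{*}\|\leq\tfrac{1+r}{2}M$, as required. The main obstacle of the whole argument is the translation performed in (b): converting the set-theoretic hypothesis on $(\mathrm{ext}\,B_{\ell_{1}})'$ into the pointwise bound on $f(k)$ depends crucially on $r<1$, which rules out $\pm e_{k}^{*}$ as accumulation points of themselves and so forces every subsequential limit of $(f(k))$ to be captured by a genuine element of $(\mathrm{ext}\,B_{\ell_{1}})'$.
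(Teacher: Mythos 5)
Your proof is correct and follows essentially the same route as the paper's: part (a) via the asymptotic additivity $\lim_n\|u_n^*+w\|=\lim_n\|u_n^*\|+\|w\|$ for coordinatewise null sequences, part (b) via the pointwise bound $\limsup_i|e_i^*(x)|\leq r$ for $x\in B_X$ (which the paper asserts without proof and you justify via $w^*$-metrizability of $B_{\ell_1}$), and part (c) by extracting a coordinatewise limit $c$ and combining (a) and (b) --- exactly the paper's argument with its $x_0^*$ equal to your $x^*+c$. The only bookkeeping point is that before invoking (a) in (c) you should note that $\lim_n\|v_n^*\|=\lim_n\|u_n^*-c\|$ exists, which is immediate from your own splitting estimate (it gives $\|u_n^*\|=\|u_n^*-c\|+\|c\|+o(1)$ with no convergence assumption) or from one further subsequence extraction, both of which are trivial fixes.
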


\begin{proof}
Assertion (a) is a straightforward consequence of the following fact:  for every sequence $\left\lbrace x_n^*\right\rbrace \subset \ell_1$ coordinatewise converging to $0$ and such that $\lim_{n\rightarrow \infty}\left\|x^*_n\right\|$ exists it holds
 \begin{equation*}
 \lim_{n \rightarrow \infty} \left\|x^*_n+x^*\right\|= \lim_{n\rightarrow \infty} \left\|x^*_n\right\|+ \left\|x^*\right\|
 \end{equation*}
 for every $x^* \in \ell_1$.
 
Now we prove assertion (b). For all $m,n \in \mathbb{N}$ and $x\in B_X$, we have
\begin{equation*}
x_n^*(x)= \sum_{i=1}^{m}x_n^*(i) e_i^*(x)+\sum_{i=m+1}^{\infty}
x_n^*(i)e_i^*(x)
\leq\sum_{i=1}^{m}\left|x_n^*(i)\right|+\left\|x_n^*\right\|
\sup_{i\geq{m+1}}\left|e_i^*(x)\right|.
\end{equation*}
Since $\left(\textrm{ext}B_{\ell_1}\right)' \subseteq r B_{\ell_1}$, then $\lim\limits_{m\rightarrow \infty} \sup_{i\geq m+1}\left|e_i^*(x)\right| \leq r$ for every $x\in B_X$.
Therefore, we easily see that
\begin{equation*}
x^*(x)=\lim\limits_{n\rightarrow \infty}x_n^*(x)\leq
\liminf_{n\rightarrow \infty}\left\|x_n^*\right\|
\lim\limits_{m\rightarrow \infty}
\left( \sup_{i\geq{m+1}}\left|e_i^*(x)\right|\right)  \leq r
\liminf_{n\rightarrow \infty}\left\|x_n^*\right\|,
\end{equation*}
which proves the thesis of assertion (b).

In order to prove the last assertion, 
without loss of generality, we may assume that there exists $x_0^*\in \ell_1$ such that $\left\lbrace x_n^*\right\rbrace$ is coordinatewise convergent to $x^*_0$ and that $\lim_{n\rightarrow\infty}\left\|x^*_n-x^*\right\|$ and $\lim_{n\rightarrow\infty}\left\|x^*_n-x^*_0\right\|$ exist. By assertions (b) and (a), we get
\begin{eqnarray*}
\lim_{n\rightarrow\infty}\left\|x^*_n-x^*\right\|&=& \lim_{n\rightarrow\infty}\left\|x^*_n-x^*_0\right\|+\left\|x^*_0-x^*\right\|
\\&\leq& \lim_{n\rightarrow\infty}\left\|x^*_n-x^*_0\right\|+r\lim_{n\rightarrow\infty}\left\|x^*_n-x^*_0\right\|
\\&=& \dfrac{1+r}{2}\lim_{m \rightarrow\infty}\lim_{n\rightarrow\infty}\left\|x^*_n-x^*_m\right\|.
\end{eqnarray*}
\end{proof}

For the sake of convenience of the reader, we recall the following known result.

\begin{lem}\label{lemma Soardi}(\cite{Soardi})
Let $Y^*$ be a dual Banach space, $K\subset Y^*$ be a convex $w^*$-compact subset and $T:K \rightarrow K$ be a nonexpansive mapping. Then, for every $x^* \in K$ there is a closed convex subset $H(x^*)\subset K$ which is invariant under $T$ and satisfies
\begin{enumerate}
 \item[(a)] $\mathrm{diam} \left(H(x^*)\right) \leq \sup_n \left\|x^*-T^n x^*\right\|;$
 \item[(b)] $\sup_{y^* \in H(x^*)}\left\|x^*-y^*\right\| \leq 2\sup_n \left\|x^*-T^n x^*\right\|$.
\end{enumerate}

\end{lem}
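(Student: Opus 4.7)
Set $r := \sup_{n \geq 0} \|x^* - T^n x^*\|$; this quantity is finite because $K$, being $w^*$-compact in the dual Banach space $Y^*$, is norm-bounded. A natural candidate for the invariant set is
$$H(x^*) := \overline{\mathrm{conv}}^* \{T^n x^* : n \geq 0\},$$
the $w^*$-closed convex hull of the forward orbit of $x^*$ under $T$. Since $K$ is $w^*$-closed, convex, and $T$-invariant, we have $H(x^*) \subset K$.

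For assertion (a), I would observe that nonexpansiveness yields, for all $0 \leq m \leq n$,
$$\|T^m x^* - T^n x^*\| \leq \|x^* - T^{n-m} x^*\| \leq r,$$
so the orbit has diameter at most $r$. Because the norm on $Y^*$ is $w^*$-lower semicontinuous and convex, the diameter of the $w^*$-closed convex hull of a bounded set coincides with the diameter of the set itself. Hence $\mathrm{diam}(H(x^*)) \leq r$. Assertion (b) is then immediate: since $x^* \in H(x^*)$,
$$\sup_{y^* \in H(x^*)} \|x^* - y^*\| \leq \mathrm{diam}(H(x^*)) \leq r \leq 2r.$$

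The nontrivial point, which I expect to be the main obstacle, is verifying the $T$-invariance $T(H(x^*)) \subset H(x^*)$. The orbit itself is clearly sent into itself by $T$, but $T$ need not be $w^*$-continuous nor linear, so one cannot directly push forward $w^*$-closures or convex combinations. To handle this I would invoke a Zorn's lemma argument on the family $\mathcal{F}$ of all $w^*$-closed convex $T$-invariant subsets of $K$ that contain the orbit of $x^*$: this family is nonempty (it contains $K$), and any totally ordered chain in $\mathcal{F}$ has nonempty intersection by $w^*$-compactness of $K$, so a minimal element $M \in \mathcal{F}$ exists. By construction $M$ is $w^*$-closed, convex, and $T$-invariant, and it still contains the orbit of $x^*$, so it inherits both bounds (a) and (b) from the orbit estimate in the previous paragraph; setting $H(x^*) := M$ completes the construction.
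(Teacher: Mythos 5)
Your preliminary estimates are fine: nonexpansiveness does give $\|T^m x^*-T^n x^*\|\leq\|x^*-T^{n-m}x^*\|\leq r$, so the orbit has diameter at most $r$, and your $w^*$-lower semicontinuity argument correctly shows that passing to the $w^*$-closed convex hull does not increase the diameter. You also correctly identify the genuine obstacle: since $T$ is neither affine nor $w^*$-continuous, $\overline{\mathrm{conv}\left(\{T^n x^*\}\right)}^*$ need not be $T$-invariant. But your Zorn's lemma step does not repair this — it contains a real gap. The minimal element $M$ of your family $\mathcal{F}$ \emph{contains} the orbit, and metric bounds do not pass from a subset to a superset: $K$ itself belongs to $\mathcal{F}$ and satisfies neither (a) nor (b). Minimality only asserts that no strictly smaller member of $\mathcal{F}$ exists; it gives no comparison between $M$ and the orbit hull, precisely because the orbit hull is not known to belong to $\mathcal{F}$ (its invariance is the unresolved point). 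So the final claim that $M$ ``inherits both bounds (a) and (b) from the orbit estimate'' is unsupported, and indeed nothing in that step uses nonexpansiveness at all. A structural hint that the construction aims at the wrong object: every member of your $\mathcal{F}$ contains $x^*$, and if $x^*\in H(x^*)$ then (a) would immediately give (b) with constant $1$; the constant $2$ in the statement signals that the intended $H(x^*)$ need not contain $x^*$.

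For comparison: the paper does not prove this lemma at all — it quotes it from \cite{Soardi} — and the known arguments obtain invariance from nonexpansiveness alone, via asymptotic quantities rather than minimality. For instance, with $d=\sup_n\|x^*-T^nx^*\|$, the set $\left\{y^*\in K:\limsup_n\|y^*-T^nx^*\|\leq d\right\}$ is norm-closed (the functional is $1$-Lipschitz), convex, contains the orbit, and is $T$-invariant because $\|Ty^*-T^{n+1}x^*\|\leq\|y^*-T^nx^*\|$ and $\limsup$ is shift-invariant; the triangle inequality then yields (b) with the constant $2$. The sharper bound (a) requires combining such sets with the $w^*$-closed convex hulls of the orbit tails, $\bigcap_m\overline{\mathrm{conv}\left(\{T^nx^*:n\geq m\}\right)}^*$, whose elements lie within $d$ of every $T^kx^*$ by $w^*$-lower semicontinuity of the norm — this is the asymptotic-center technique in the spirit of \cite{Lim1980}. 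If you rework your proof, replace the Zorn step by an invariant set defined through a $\limsup$ condition of this type.
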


\begin{thm}\label{Theorem stable fpp}
Let $X$ be a predual of $\ell_1$. Then the following are equivalent.
\begin{enumerate}
 \item[(i)]$\ell_1$ has the stable $\sigma(\ell_1,X)$-fpp;
 \item[(ii)] $\left(\mathrm{ext} \, B_{\ell_1}\right)'\subset rB_{\ell_1}$ for some $0\leq r<1$.
\end{enumerate}
\end{thm}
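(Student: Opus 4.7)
The plan is to prove the two implications separately. For \emph{(ii)} $\Rightarrow$ \emph{(i)}, the natural threshold is $\gamma:=2/(1+r)>1$; I would show that every Banach space $Y$ with $d(X,Y)<\gamma$ has $Y^*$ with the $\sigma(Y^*,Y)$-fpp. Suppose for contradiction this fails: there is a $w^*$-compact convex $K\subset Y^*$ and a nonexpansive fixed-point-free $T:K\to K$. Passing to a minimal such invariant set via Zorn's lemma (applied in the $w^*$-compact lattice of $T$-invariant $w^*$-closed convex subsets), one obtains $\mathrm{diam}(K)=d>0$. An approximate fixed-point sequence $\{y_n^*\}\subset K$ is produced by the standard contraction approximation; extracting a subsequence, assume $y_n^*\xrightarrow{w^*}y^*\in K$. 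A $w^*$-version of the Goebel--Karlovitz--Lin lemma---or alternatively a direct application of Lemma \ref{lemma Soardi}---gives the diametral identity $\lim_n\|y_n^*-z^*\|_{Y^*}=d$ for every $z^*\in K$; in particular $\lim_n\|y_n^*-y^*\|_{Y^*}=\lim_m\lim_n\|y_n^*-y_m^*\|_{Y^*}=d$.

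Now fix an isomorphism $\phi:X\to Y$ with $\|\phi\|\|\phi^{-1}\|<\gamma$ and let $\phi^*:Y^*\to\ell_1$ be its adjoint, which is $\sigma(Y^*,Y)$-to-$\sigma(\ell_1,X)$ continuous. Putting $\tilde y_n^*:=\phi^*(y_n^*)$ and $\tilde y^*:=\phi^*(y^*)$, we have $\tilde y_n^*\xrightarrow{\sigma(\ell_1,X)}\tilde y^*$. After extracting the subsequence supplied by Lemma \ref{Lemma sequences}(c),
\begin{equation*}
\lim_n\|\tilde y_n^*-\tilde y^*\|_1 \le \frac{1+r}{2}\,\lim_m\lim_n\|\tilde y_n^*-\tilde y_m^*\|_1.
\end{equation*}
Combining with the uniform distortion bounds $\|\phi^{-1}\|^{-1}\|z^*\|_{Y^*}\le \|\phi^*z^*\|_1\le \|\phi\|\|z^*\|_{Y^*}$ and the diametral identities above, this collapses to $d/\|\phi^{-1}\|\le \frac{1+r}{2}\|\phi\|\,d$, i.e.\ $1\le \frac{1+r}{2}\|\phi\|\|\phi^{-1}\|<\frac{1+r}{2}\gamma=1$, a contradiction.

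For \emph{(i)} $\Rightarrow$ \emph{(ii)}, I argue by contraposition. If (ii) fails, then $\sup\{\|u^*\|_1:u^*\in(\mathrm{ext}\,B_{\ell_1})'\}=1$, so for every $\eta>0$ there is a subsequence $\{\varepsilon_k e_{n_k}^*\}$ of signed standard basis vectors $\sigma(\ell_1,X)$-converging to some $u^*\in B_{\ell_1}$ with $\|u^*\|_1>1-\eta$. For each such $\eta$ I would construct an isometric predual $Y_\eta$ of $\ell_1$, with $d(X,Y_\eta)\to 1$ as $\eta\to 0$, whose $w^*$-topology $\sigma(\ell_1,Y_\eta)$ promotes some such subsequence to convergence to a point of $S_{\ell_1}$. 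Theorem \ref{Theorem fpp} then yields that $\ell_1$ fails the $\sigma(\ell_1,Y_\eta)$-fpp, so the stable $w^*$-fpp fails. The construction of $Y_\eta$ proceeds by a small perturbation of the embedding $X\hookrightarrow\ell_\infty$, rescaling the norming direction for $u^*$ so that the $w^*$-limit becomes norm-one in the modified predual; the parametrization of isometric preduals of $\ell_1$ from \cite{Casini-Miglierina-Piasecki2015} provides the right framework.

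The main obstacle is the converse direction, specifically the construction of the perturbed predual $Y_\eta$: one must preserve the $\ell_1$-predual (Lindenstrauss) structure, keep Theorem \ref{Theorem fpp} applicable, control the Banach--Mazur distance, and simultaneously upgrade a near-norm-one $w^*$-limit to a genuine norm-one limit. A subsidiary difficulty in the forward direction is invoking a $w^*$-version of the Goebel--Karlovitz--Lin diametral-sequence lemma in the isomorphic copy $Y^*$ of $\ell_1$, which is best handled by transferring the Karlovitz-type argument to $\ell_1$ through the adjoint isomorphism rather than working directly in $Y^*$.
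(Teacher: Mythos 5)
Your forward direction (ii) $\Rightarrow$ (i) arrives at the paper's constant $\gamma=2/(1+r)$ and its transfer-to-$\ell_1$ computation via the adjoint is sound, but it rests on an unproven key step: the ``$w^*$-version of the Goebel--Karlovitz--Lin lemma''. This is not an off-the-shelf fact. The classical proof uses Mazur's theorem to conclude that the norm-closed convex invariant sets arising in the argument (e.g.\ $\{y:\limsup_n\|y_n^*-y\|\le c\}$) are weakly closed, hence compact, so that minimality applies to them; for the $w^*$-topology this fails, since norm-closed bounded convex subsets of a dual need not be $w^*$-closed, and your minimality is only over $w^*$-compact convex invariant sets. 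Your fallback claim that ``a direct application of Lemma \ref{lemma Soardi}'' gives the diametral identity $\lim_n\|y_n^*-z^*\|=\mathrm{diam}\,K$ is incorrect: that lemma produces invariant sets with \emph{small} diameter, not diametral sequences. The paper sidesteps all of this with Soardi's iteration scheme: it combines Lemma \ref{lemma Soardi} with Lemma \ref{Lemma sequences}(c) to get $\alpha_{n+1}\le\bigl[\|A\|\,\|A^{-1}\|\tfrac{1+r}{2}\bigr]\alpha_n$ for the asymptotic radii of successive approximate fixed-point sequences, so the centers $x_n^*$ form a Cauchy sequence converging in norm to a fixed point whenever $d(X,Y)<\tfrac{2}{1+r}$ --- no minimal sets and no Goebel--Karlovitz lemma needed. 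Your direction would become correct if you either proved the $w^*$-diametral lemma in this setting or replaced it by this iteration.

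In the converse direction the central construction is missing, and your framing makes it harder than it needs to be. Definition \ref{Def Stable fpp} quantifies over \emph{all} dual pairs $(Y,Y^*)$ with $d(X,Y)<\gamma$, so to refute stability you do not need $Y_\eta$ to be an isometric predual of $\ell_1$, and indeed it is unclear (and nowhere in \cite{Casini-Miglierina-Piasecki2015}) that one can promote a $w^*$-limit $u^*$ with $\|u^*\|>1-\eta$ to a norm-one limit while keeping the dual isometrically $\ell_1$ and $d(X,Y_\eta)$ close to $1$. Moreover, even with a norm-one limit, Theorem \ref{Theorem fpp} does not immediately apply: failure of its condition (ii) requires $\overline{\mathrm{conv}(C)}^*\subset S_{\ell_1}$, which needs a sign condition like $e^*(n_k)\ge 0$ (as in Theorem 4.1 of \cite{Casini-Miglierina-Piasecki2015}), not mere convergence to the sphere. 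The paper instead renorms $X$ itself: after choosing $(e_{n_k}^*)\xrightarrow{w^*}e^*\ne 0$ with small tail mass $\sum_k|e^*(n_k)|<\tfrac{\varepsilon}{4}\|e^*\|$, it rescales $C=\mathrm{conv}\{e^*,e_{n_1}^*,\dots\}$ to a $w^*$-compact convex set $K$ disjoint from $B_{\ell_1}$, declares $D=\mathrm{conv}(B_{\ell_1}\cup K\cup -K)$ the new dual ball (the $w^*$-topology is unchanged, and $d(X,Y)\le\frac{1}{(1-\varepsilon)\|e^*\|}\to 1$), verifies $B_Z=\mathrm{conv}(K\cup -K)$ on the span $Z$, and exhibits an explicit shift-type linear contraction whose restriction to $K$ is a fixed-point-free nonexpansive map --- entirely bypassing Theorem \ref{Theorem fpp} and the $\ell_1$-predual structure of $Y$. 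As submitted, this direction is a program rather than a proof, and the renorming idea is the missing ingredient.
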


\begin{proof}
We first prove that (i) implies (ii).
By contradiction, let us suppose that $X$ does not satisfies property (ii).
For clarity we divide the proof of this implication into three parts.

\textbf{Step 1.} (\textit{Renorming.}) Let $\varepsilon \in (0,1)$ and choose a
subsequence $(e_{n_k}^*)$ of the standard basis $(e_n^*)$ in $\ell_1$
such that $(e_{n_k}^*)$ is $w^*$-convergent to $e^*\neq 0$, and
\begin{equation}\label{estimate 5}
\sum_{k=1}^{\infty}\left|e^*(n_k)\right|<
\frac{\varepsilon}{4}\left\|e^*\right\|.
\end{equation}

Consider the subset $C$ of $\ell_1$ defined by
$$C= \left\{\alpha_0 e^*+\sum_{k=1}^{\infty}\alpha_k e_{n_k}^*: \alpha_i \geq 0 \textrm{ for all }i\in
\mathbb{N}\cup\left\{0\right\} \textrm{ and }
 \sum_{i=0}^{\infty}\alpha_i=1\right\}.$$
It is easy to check that $C$ is convex and $w^*$-compact (see
e.g. Corollary 2 in \cite{Japon-Prus2004}). From (\ref{estimate 5})
it follows that for every $x=\alpha_0
e^*+\sum_{k=1}^{\infty}\alpha_k e_{n_k}^* \in C$ we have

\begin{eqnarray*}
\left\|x\right\|&=&\alpha_0 \sum_{j\in \mathbb{N}\setminus
\left\{n_k\right\}}\left|e^*(j)\right|+
\sum_{j\in\left\{n_k\right\}}\left|\alpha_0 e^*(j)+\alpha_k\right|
\\&\geq&\alpha_0 \sum_{j\in \mathbb{N}\setminus
\left\{n_k\right\}}\left|e^*(j)\right|+\sum_{k=1}^{\infty}\alpha_k-
\alpha_0\sum_{j\in\left\{n_k\right\}}\left|e^*(j)\right|
\\&\geq&\alpha_0 \sum_{j\in \mathbb{N}\setminus
\left\{n_k\right\}}\left|e^*(j)\right|+(1-\alpha_0) \sum_{j\in
\mathbb{N}\setminus \left\{n_k\right\}}\left|e^*(j)\right|-
\alpha_0\sum_{j\in\left\{n_k\right\}}\left|e^*(j)\right|
\\&\geq&\sum_{j\in \mathbb{N}\setminus
\left\{n_k\right\}}\left|e^*(j)\right|-\sum_{j\in\left\{n_k\right\}}\left|e^*(j)\right|
\\&\geq&(1-\frac{\varepsilon}{4})\left\|e^*\right\|-\frac{\varepsilon}{4}\left\|e^*\right\|
\\&=&(1-\frac{\varepsilon}{2})\left\|e^*\right\|.
\end{eqnarray*}
Let
$$K:=\frac{1}{(1-\varepsilon)\left\|e^*\right\|}C.$$
The set $K$ is convex, $w^*$-compact and $K\cap
B_{\ell_1}=\emptyset$. Next we define the set $D \subset \ell_1$ by
$$D=\textrm{conv}(B_{\ell_1}\cup K\cup -K).$$
It is easy to check that $D$ is convex, symmetric, $w^*$-compact,
and $0$ is its interior point. Therefore $D$ is a dual unit ball of an
equivalent norm $\left\|\left|\cdot\right|\right\|$ on $X$. Let
$Y=(X,\left\|\left|\cdot\right|\right\|)$. Obviously, $D=B_{Y^*}$
and
$$B_{\ell_1}\subset D \subset \frac{1}{(1-\varepsilon)\left\|e^*\right\|} B_{\ell_1},$$
so
\begin{eqnarray}\label{estimate 6}
d(X,Y)\leq \frac{1}{(1-\varepsilon)\left\|e^*\right\|}.
\end{eqnarray}

Consider a subspace $Z\subset Y^*$ defined as
$$Z=\overline{\textrm{span}\left( \left\{e^*,e_{n_1}^*,e_{n_2}^*,\dots\right\}\right) }=
\overline{\textrm{span}\left( \left\{e_0^*,e_{n_1}^*,e_{n_2}^*,\dots\right\}\right) },$$
where
$$e_0^*:=\frac{e^*-\sum\limits_{j=1}^{\infty}e^*(n_j)e^*_{n_j}}{\left\|e^*-\sum\limits_{j=1}^{\infty}e^*(n_j)e^*_{n_j}\right\|}
=\frac{e^*-\sum\limits_{j=1}^{\infty}e^*(n_j)e^*_{n_j}}{\left\|e^*\right\|-\sum\limits_{j=1}^{\infty}\left|e^*(n_j)\right|}.$$

\textbf{Step 2.} We claim that $B_Z=D \cap Z$ has the following
property:

\begin{equation}
B_Z=\textrm{conv}(K\cup -K). \tag{$\heartsuit$}
\end{equation}

Indeed, from definition of $K$ and $Z$, it follows that
\begin{eqnarray*}
B_Z&=&\textrm{conv}(B_{\ell_1}\cup K\cup -K)\cap
Z=\textrm{conv}(B_{\ell_1}\cup \textrm{conv}(K\cup -K))\cap Z
\\&=&\textrm{conv}((B_{\ell_1}\cap Z)\cup \textrm{conv}(K\cup -K))
\end{eqnarray*}
so, in order to prove property $(\heartsuit)$, we have to show that

\begin{equation}\label{prop 1}
B_{\ell_1}\cap Z \subset \textrm{conv}(K\cup -K).
\end{equation}

It is easy to see that $\ell_1$ and $Z$ are isometrically isomorphic
via $\phi: \ell_1\rightarrow Z$ defined by $\phi (e_1)=e_0^*$,
$\phi(e_j)=e_{n_{j-1}}^*$, $j\geq 2$, where $(e_j)_{j\geq 1}$
denotes the standard basis in $\ell_1$. Since
$B_{\ell_1}=\overline{\textrm{conv}\left( \left\{\pm e_1,\pm e_2,\pm
e_3,\dots\right\}\right) }$, it follows that

$$B_{\ell_1}\cap Z=\overline{\textrm{conv}\left( \left\{\pm e_0^*,\pm e_{n_1}^*,\pm
e_{n_2}^*,\dots\right\}\right)} .$$ Consequently, in order to prove
(\ref{prop 1}), it is enough to show that
$$\left\{\pm e_0^*,\pm e_{n_1}^*,\pm
e_{n_2}^*,\dots\right\} \subset \textrm{conv}(K\cup -K).$$ Obviously
\begin{equation}\label{prop2}
\left\{\pm \frac{e_{n_1}^*}{(1-\varepsilon)\left\|e^*\right\|},\pm
\frac{e_{n_2}^*}{(1-\varepsilon)\left\|e^*\right\|},\dots\right\}\subset
\textrm{conv}(K\cup -K)
\end{equation}
so $$\left\{\pm e_{n_1}^*,\pm e_{n_2}^*,\dots\right\} \subset
\textrm{conv}(K\cup -K).$$ Therefore it remains to prove that $$\pm
e_0^*\in \textrm{conv}(K\cup -K).$$ The case
$\sum_{j=1}^{\infty}\left|e^*(n_j)\right|=0$ is trivial. Suppose
that $\sum_{j=1}^{\infty}\left|e^*(n_j)\right|\neq 0$. From
(\ref{prop2}) we have
\begin{eqnarray*}
\textrm{conv}(K\cup -K)&\supset&\overline{\textrm{conv}\left( \left\{\pm
\frac{e_{n_1}^*}{(1-\varepsilon)\left\|e^*\right\|},\pm
\frac{e_{n_2}^*}{(1-\varepsilon)\left\|e^*\right\|},\dots\right\}\right) }
\\&=& \frac{1}{(1-\varepsilon)\left\|e^*\right\|}B_{\ell_1} \cap
\overline{\textrm{span}\left( \left\{e_{n_1}^*,e_{n_2}^*,\dots\right\}\right) }
\end{eqnarray*}
and, consequently,
\begin{equation*}
\pm\frac{1}{(1-\varepsilon)\left\|e^*\right\|}\cdot
\frac{\sum\limits_{j=1}^{\infty}e^*(n_j)e^*_{n_j}}{\sum\limits_{j=1}^{\infty}\left|e^*(n_j)\right|}
\in \textrm{conv}(K\cup -K).
\end{equation*}
Therefore, we easily see that for
$t:=\left(\sum_{j=1}^{\infty}\left|e^*(n_j)\right|\right)\cdot
\left(1+\sum_{j=1}^{\infty}\left|e^*(n_j)\right|\right)^{-1}$,

\begin{eqnarray*}
\textrm{conv}(K\cup -K)&\ni& t\cdot
\frac{-\sum\limits_{j=1}^{\infty}e^*(n_j)e^*_{n_j}}
{(1-\varepsilon)\left\|e^*\right\|\sum\limits_{j=1}^{\infty}\left|e^*(n_j)\right|}+
(1-t)\cdot \frac{e^*}{(1-\varepsilon)\left\|e^*\right\|}
\\&=& \frac{e^*-\sum\limits_{j=1}^{\infty}e^*(n_j)e^*_{n_j}}{(1-\varepsilon)\left\|e^*\right\|
(1+\sum\limits_{j=1}^{\infty}\left|e^*(n_j)\right|)} \\&=&
\frac{\left\|e^*\right\|-\sum\limits_{j=1}^{\infty}\left|e^*(n_j)\right|}{(1-\varepsilon)\left\|e^*\right\|
(1+\sum\limits_{j=1}^{\infty}\left|e^*(n_j)\right|)}e_0^*
\end{eqnarray*}
and, since $\textrm{conv}(K\cup -K)$ is symmetric, we get
\begin{equation*}
\pm
\frac{\left\|e^*\right\|-\sum\limits_{j=1}^{\infty}\left|e^*(n_j)\right|}{(1-\varepsilon)\left\|e^*\right\|
(1+\sum\limits_{j=1}^{\infty}\left|e^*(n_j)\right|)}e_0^* \in
\textrm{conv}(K\cup -K).
\end{equation*}
From (\ref{estimate 5}) it follows that
$$\frac{\left\|e^*\right\|-\sum\limits_{j=1}^{\infty}\left|e^*(n_j)\right|}{(1-\varepsilon)\left\|e^*\right\|
(1+\sum\limits_{j=1}^{\infty}\left|e^*(n_j)\right|)}>
\frac{1-\frac{\varepsilon}{4}}{(1-\varepsilon)(1+\frac{\varepsilon}{4})}>1.$$
Thus $\pm e_0^* \in \textrm{conv}(K\cup -K)$ which finishes the
proof of property $(\heartsuit)$.

\textbf{Step 3.} (\textit{Fixed point free nonexpansive map.}) We now
define the operator $T:Z\rightarrow Z$ by
$$T\left(t_0e^*+\sum_{j=1}^{\infty}t_j e_{n_j}^*\right)=\sum_{j=1}^{\infty}t_{j-1} e_{n_j}^* \textrm{ ,} \quad
\sum_{j=0}^{\infty}\left|t_{j}\right| <\infty.$$

It is easy to see (via (\ref{estimate 5})) that the mapping $T$ is
well-defined and linear. We claim that $T(B_Z)\subset B_Z$. Indeed,
property $(\heartsuit)$ ensures that every $x\in B_Z$ has the form
$x=(1-\lambda)y+\lambda z$ for some $\lambda \in [0,1]$, $y \in K$
and $z \in -K$. Therefore, since $T(K)\subset K$ and $T(-K)\subset
-K$, we see that
$$Tx=T((1-\lambda)y+\lambda z)=(1-\lambda)Ty+\lambda Tz \in B_Z.$$
Consequently, since $K \subset S_Z$, $\left\|T\right\|=1$. Clearly,
the restriction $T|_{K}$ of operator $T$ to the $w^*$-compact
convex set $K \subset Y^*$ is a fixed point free nonexpansive map.

Finally, by taking $e^*$ arbitrarily close to the unit sphere
$S_{\ell_1}$ and $\varepsilon\searrow 0$ in (\ref{estimate 6}), we
get a contradiction.

Now, we proceed to prove the implication (ii) $\Rightarrow$ (i).  This part of the proof is an appropriate adaptation of the proof of the main result in \cite{Soardi}.
Let $Y$ be a Banach space isomorphic to $X$ and $A$ be any
isomorphism from $X$ onto $Y$. Let $T$:$K \rightarrow K$ be a nonexpansive mapping, where $K\subset Y^*$ is a convex $w^*$-compact set. It is well known that there exists a sequence $\left\{ x^*_{0,n} \right\} \subset K$ such that 
$\lim_{n \rightarrow \infty} \left\|x^*_{0,n}-Tx^*_{0,n}\right\|_{Y^*}=0$ (see, e.g, \cite{Goebel-Kirk}). We may assume that $\left\{x^*_{0,n}\right\}$ is $w^*$-convergent to $x^*_0 \in K$ and that the limit $\alpha_0=\lim_{n \rightarrow \infty} \left\|x^*_{0,n}-x^*_0 \right\|_{Y^*}$ exists. Since $T$ is a nonexpansive mapping, for every $k \in \mathbb{N}$ we have
$$
\left\|x^*_0-T^kx^*_0\right\|_{Y^*}\leq \limsup_{n\rightarrow \infty}\left\|x^*_{0,n}-T^kx^*_0\right\|_{Y^*}\leq \alpha_0.
$$
Therefore, by (a) in Lemma \ref{lemma Soardi}, there exists a closed convex invariant set $H(x^*_0)\subset K$ such that $\mathrm{diam}(H(x^*_0))\leq \alpha_0$. Then there exists a sequence $\left\{ x^*_{1,n}\right\}\subset H(x^*_0)$ such that
\begin{enumerate}
\item [(1)] $\lim_{n \rightarrow \infty} \left\|x^*_{1,n}-Tx^*_{1,n}\right\|_{Y^*}=0$,
\item [(2)] $\left\{ x^*_{1,n}\right\}$ is $w^*$-convergent to $x_1^*\in K$,
\item [(3)] $\alpha_1=\lim_{n \rightarrow \infty} \left\|x^*_{1,n}-x^*_1 \right\|_{Y^*}$ exists.
\end{enumerate}
Now, we have that $\left\{ A^*x^*_{1,n} \right\}\subset\ell_1$ is $\sigma(\ell_1,X)$-convergent to $A^*x^*_1$. Moreover, there is no loss of generality in assuming that $\lim_{n \rightarrow \infty} \left\|A^*x^*_{1,n}-A^*x^*_1\right\|_{\ell_1}$ exists. Hence, by (c) in Lemma \ref{Lemma sequences} we obtain
\begin{eqnarray*}
\alpha_0 &\geq& \limsup_{m \rightarrow \infty}\limsup_{n \rightarrow \infty}\left\|x^*_{1,m}-x^*_{1,n}\right\|_{Y^*}
\\ &\geq& \dfrac{1}{\|A\|}\limsup_{m \rightarrow \infty}\limsup_{n \rightarrow \infty}\left\|A^*x^*_{1,m}-A^*x^*_{1,n}\right\|_{\ell_1}
\\ &\geq& \dfrac{1}{\|A\|} \dfrac{2}{1+r}\lim_{n \rightarrow \infty}\left\|A^*x^*_{1,n}-A^*x^*_{1}\right\|_{\ell_1}
\\ &\geq& \dfrac{1}{\|A^{-1}\|\,\|A\|} \dfrac{2}{1+r}\lim_{n \rightarrow \infty}\left\|x^*_{1,n}-x^*_{1}\right\|_{Y^*}=\dfrac{1}{\|A^{-1}\|\,\|A\|} \dfrac{2}{1+r}\alpha_1.
\end{eqnarray*}
From the inequalities above, we conclude that $\alpha_1 \leq \|A^{-1}\|\,\|A\| \dfrac{1+r}{2}\alpha_0$.
Moreover, since $\{x^*_0-x^*_{1,n}\}$ is $w^*$-convergent to $x^*_0-x^*_1$, (b) in Lemma \ref{lemma Soardi} yields $\|x^*_0-x^*_1\|_{Y^*}\leq 2\alpha_0$.

Repeated applications of this construction give us a sequence $\{\alpha_n \}$ of non negative numbers such that
\begin{equation}\label{sequence alpha}
\alpha_{n+1}\leq \left[\|A^{-1}\|\,\|A\| \dfrac{1+r}{2}\right]^{n+1}\alpha_0
\end{equation}
and a sequence $\{x^*_n\}\subset Y^*$ such that $\|x^*_n-x^*_{n+1}\|_{Y^*}\leq 2\alpha_n$ and $\left\|x^*_n-Tx^*_n\right\|_{Y^*}\leq \alpha_n$.
From inequality (\ref{sequence alpha}), we see at once that the sequence $\{\alpha_n\}$ converges to $0$ if  $\|A^{-1}\|\,\|A\|< \frac{2}{1+r}$. Moreover, if this condition holds, the sequence $\{x^*_n\}$ strongly converges to a fixed point of $T$. This fact concludes the proof by showing that $Y^*$ has the $w^*$-fpp whenever $d(X,Y)< \frac{2}{1+r}$.
\end{proof}

\begin{rem}\label{remark Example Lim}
It is easy to observe that, from the proof of Theorem \ref{Theorem stable fpp}, we obtain also a quantitative estimation of the stability constant $\gamma=\frac{2}{1+r}$.
Moreover, it is worth pointing out that the estimation above is sharp when $r=0$ as shown by the example contained in \cite{Lim1980}. On the other hand the proof of the sharpness of the present estimation when $0 < r<1$ remains as an open problem.   
\end{rem}

\section{Polyhedrality in Lindenstrauss spaces}\label{sec:poly}
The aim of this section is to show that the geometrical properties used in Theorems \ref{Theorem fpp} and \ref{Theorem stable fpp} are strictly related to polyhedrality.
The starting point to consider polyhedrality in an infinite-dimensional setting is the definition given by Klee in \cite{Klee1960}, where he extended the notion of convex finite-dimensional polytope for the case of the closed unit ball $B_X$ of an infinite-dimensional Banach space $X$. Thenceforth polyhedrality was extensively studied and several different definitions have been stated. For a detailed account about these definitions and their relationships see \cite{Durier-Papini1993} and \cite{Fonf-Vesely2004}. Here we restrict our attention to some of the definitions collected in \cite{Fonf-Vesely2004}. Moreover, we show that the property introduced in Theorem \ref{Theorem fpp} can be considered as a new definition of polyhedrality (namely, property (pol-iii)) since we will prove that this property is placed in an intermediate position between other polyhedrality notions that are already considered in the literature. 
\begin{dfn}\label{Dfn:Polyhedrality}
Let $X$ be a Banach space. We consider the following properties of $X$:
\begin{enumerate}
 \item [(pol - i)] $\left(\mathrm{ext} \, B_{X^*}\right)'\subset \left\lbrace 0\right\rbrace$ (\cite{Maserick1965});
 \item [(pol - ii)] $\left(\mathrm{ext} \, B_{X^*}\right)'\subset rB_{X^*}$ for some $0<r<1$ (\cite{Fonf-Vesely2004});
 \item [(pol - iii)] there is no infinite set $C \subset \mathrm{ext} \, B_{X^*}$ such that $\overline{\mathrm{conv} (C)}^* \subset S_{X^*}$;
 \item [(pol - iv)] there is no infinite-dimensional $w^*$-closed proper face of $B_{X^*}$ (\cite{Lazar1969});
 \item [(pol - v)] $x^*(x)<1$ whenever $x \in S_{X}$ and $x^* \in \left(\mathrm{ext} \, B_{X^*}\right)'$ (\cite{Gleit-McGuigan1972});
 \item [(pol - vi)] the set $\mathrm{ext} \, D(x)$ is finite for each $x \in S_X$ (property ($\Delta$) in \cite{Fonf-Vesely2004});
 \item [(pol - vii)] $\sup \left\lbrace x^*(x) : x^*\in \mathrm{ext} \, B_{X^*} \setminus D(x)\right\rbrace <1$ for each $x \in S_X$ (\cite{BrosowskiDeutsch1974});
 \item[(pol-K)] the unit ball of every finite-dimensional subspace of $X$ is a polytope (\cite{Klee1960}).
\end{enumerate}
\end{dfn}

The following theorem clarifies the relationships between the various notions of polyhedrality stated in Definition \ref{Dfn:Polyhedrality} in the framework of Lindenstrauss spaces.
\begin{thm}\label{Thm:relationships polyhedrality}
Let $X$ be a Lindenstrauss space. The following relationships hold:
\[
\mathrm{(pol-i)}\Rightarrow\mathrm{(pol-ii)}\Rightarrow\mathrm{(pol-iii)}\Rightarrow\begin{array}{c}
\mathrm{(pol-v)}\\
\Updownarrow\\
\mathrm{(pol-iv)}\\
\Updownarrow\\
\mathrm{(pol-vi)}
\end{array}\Rightarrow\mathrm{(pol-vii)}\mathrm{\Leftrightarrow(pol-K)}.
\]
\end{thm}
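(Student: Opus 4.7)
The plan is to walk along the chain of implications, dispatching the trivial ones quickly, giving direct geometric arguments for the novel steps that connect (iii) to the block $\mathrm{(pol\text{-}iv)}\!\Leftrightarrow\!\mathrm{(pol\text{-}v)}\!\Leftrightarrow\!\mathrm{(pol\text{-}vi)}$, and invoking \cite{Fonf-Vesely2004} and \cite{BrosowskiDeutsch1974} for the equivalences that are already available in the literature. The whole argument remains inside $B_{X^*}$, and the Lindenstrauss assumption only enters to justify the structural results about $w^*$-closed faces that are imported from \cite{Lazar-Lindenstrauss1971}.

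The implication $\mathrm{(pol\text{-}i)}\Rightarrow\mathrm{(pol\text{-}ii)}$ is immediate from the definitions, and $\mathrm{(pol\text{-}vii)}\Leftrightarrow\mathrm{(pol\text{-}K)}$ is the classical Brosowski--Deutsch characterization, which is cited directly. For $\mathrm{(pol\text{-}ii)}\Rightarrow\mathrm{(pol\text{-}iii)}$ I would pick any infinite $C\subset\mathrm{ext}\, B_{X^*}$ and use $w^*$-compactness of $B_{X^*}$ to produce a $w^*$-accumulation point $x^*\in C'\subset(\mathrm{ext}\, B_{X^*})'\subset rB_{X^*}$, so that $\|x^*\|\leq r<1$; since $x^*\in\overline{C}^*\subset\overline{\mathrm{conv}(C)}^*$, this set is not contained in $S_{X^*}$.

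For $\mathrm{(pol\text{-}iii)}\Rightarrow\mathrm{(pol\text{-}iv)}$ I would argue by contrapositive. Let $F$ be an infinite-dimensional $w^*$-closed proper face of $B_{X^*}$. A standard convexity check shows $F\subset S_{X^*}$ (any face containing a point with $\|x^*\|<1$ must contain a ball around it, hence equal $B_{X^*}$). The $w^*$-Krein--Milman theorem then gives $F=\overline{\mathrm{conv}(\mathrm{ext}\, F)}^*$, and the set $\mathrm{ext}\, F$ must be infinite, otherwise $F=\mathrm{conv}(\mathrm{ext}\, F)$ would be finite-dimensional. Because $F$ is a face of $B_{X^*}$, $\mathrm{ext}\, F\subset\mathrm{ext}\, B_{X^*}$, and taking $C:=\mathrm{ext}\, F$ contradicts (iii).

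The internal equivalences $\mathrm{(pol\text{-}iv)}\Leftrightarrow\mathrm{(pol\text{-}v)}\Leftrightarrow\mathrm{(pol\text{-}vi)}$ together with the implication $\mathrm{(pol\text{-}vi)}\Rightarrow\mathrm{(pol\text{-}vii)}$ are contained in \cite{Fonf-Vesely2004}, so I would cite them and only recall the guiding ideas: for $\mathrm{(pol\text{-}vi)}\Rightarrow\mathrm{(pol\text{-}iv)}$, every $w^*$-closed proper face sits inside some maximal exposed face $D(x)$, and finiteness of $\mathrm{ext}\, D(x)$ forces $D(x)$ to be a finite-dimensional polytope; conversely, if $\mathrm{ext}\, D(x)$ were infinite, $D(x)$ itself would be an infinite-dimensional $w^*$-closed proper face. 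Property (v) is the functional reformulation of the absence of infinite clustering of $\mathrm{ext}\, B_{X^*}$ on a single maximal face. For the final step $\mathrm{(pol\text{-}vi)}\Rightarrow\mathrm{(pol\text{-}vii)}$, a failure produces a sequence $x_n^*\in\mathrm{ext}\, B_{X^*}\setminus D(x)$ with $x_n^*(x)\to 1$, whose $w^*$-limit lies in $D(x)$, and a Milman-type argument yields infinitely many extreme points of $D(x)$.

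The main obstacle is the self-contained presentation of the block $\mathrm{(pol\text{-}iv)}\Leftrightarrow\mathrm{(pol\text{-}v)}\Leftrightarrow\mathrm{(pol\text{-}vi)}$: one must be careful that a general $w^*$-closed proper face of $B_{X^*}$ can indeed be enlarged to an exposed face $D(x)$, which in the Lindenstrauss setting rests on the face structure results of Lazar--Lindenstrauss. Once this technical point is secured, the chain closes with the Brosowski--Deutsch equivalence at the right end.
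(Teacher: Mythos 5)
Your treatment of the outer implications is fine: (pol-i)$\Rightarrow$(pol-ii) is immediate, your compactness argument for (pol-ii)$\Rightarrow$(pol-iii) is correct, and your contrapositive Krein--Milman argument for (pol-iii)$\Rightarrow$(pol-iv) correctly spells out what the paper dismisses as trivial. The genuine gap is in the central block. You defer the equivalences (pol-iv)$\Leftrightarrow$(pol-v)$\Leftrightarrow$(pol-vi) to \cite{Fonf-Vesely2004}, but that survey treats general Banach spaces, where these properties are \emph{not} pairwise equivalent; the equivalences hold only under the Lindenstrauss hypothesis and are precisely the new content of this theorem. The paper's route is: (pol-iv)$\Rightarrow$(pol-v) via the proof of Theorem 1.2 in \cite{Gleit-McGuigan1972}, (pol-v)$\Rightarrow$(pol-vi) trivially, and (pol-vi)$\Rightarrow$(pol-iv) via a new lemma proved in the paper (Lemma \ref{Lem:weak^*-closed faces}): every $w^*$-closed proper face $F$ of $B_{X^*}$ satisfies $F\subseteq D(x)$ for some $x\in S_X$, established by introducing the space $A_0(H)$ of $w^*$-continuous affine symmetric functions on $H=\mathrm{conv}(F\cup(-F))$ and applying Proposition 1 of \cite{Lazar1969} to realize a suitable affine function as evaluation at a point of $S_X$. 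You correctly flag this face-enlargement fact as the main obstacle, but you leave it unproved and attribute it to ``face structure results of Lazar--Lindenstrauss,'' where it does not appear in this form; since the whole chain hinges on it, this is a missing idea, not a legitimate citation.

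Two of your sketches would also fail as written. For (pol-iv)$\Rightarrow$(pol-vi) you claim that if $\mathrm{ext}\,D(x)$ were infinite then $D(x)$ would be infinite-dimensional; this is false for general unit balls (a two-dimensional disk face has infinitely many extreme points) and in the Lindenstrauss setting requires the simplex structure of $w^*$-closed faces of $B_{X^*}$ (cf.\ Remark 5.2 in \cite{Casini-Miglierina-Piasecki-Vesely2016}: $D(x)$ is finite-dimensional iff $\mathrm{ext}\,D(x)$ is finite) --- the paper avoids this entirely by routing through (pol-v). For (pol-vi)$\Rightarrow$(pol-vii), your ``Milman-type argument yielding infinitely many extreme points of $D(x)$'' does not work: the extreme points $x_n^*$ with $x_n^*(x)\to 1$ lie \emph{outside} $D(x)$, and their $w^*$-cluster point need not be extreme in $D(x)$, so no contradiction with finiteness of $\mathrm{ext}\,D(x)$ results. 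The clean step is (pol-v)$\Rightarrow$(pol-vii): the cluster point $y^*$ lies in $\left(\mathrm{ext}\,B_{X^*}\right)'$ and satisfies $y^*(x)=1$, directly contradicting (pol-v); this is Theorem 1 in \cite{Durier-Papini1993}, which is what the paper cites. Finally, (pol-vii)$\Leftrightarrow$(pol-K) is not a classical Brosowski--Deutsch result --- \cite{BrosowskiDeutsch1974} only introduced property (pol-vii); the equivalence with Klee polyhedrality is specific to Lindenstrauss spaces and is Theorem 4.3 in \cite{Casini-Miglierina-Piasecki-Vesely2016}.
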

\begin{proof}
The implications
\[
\begin{array}{c}
\mathrm{(pol-i)}\Rightarrow\mathrm{(pol-ii)}\Rightarrow\mathrm{(pol-iii)}\Rightarrow\mathrm{(pol-iv)},\\
\mathrm{(pol-v)}\Rightarrow\mathrm{(pol-vi)}
\end{array}
\]
are trivial.
The proof of Theorem 1.2 (p. 402-403) in \cite{Gleit-McGuigan1972} shows that $$\mathrm{(pol-iv)}\Rightarrow\mathrm{(pol-v)}.$$
The implication
$$\mathrm{(pol-vi)}\Rightarrow\mathrm{(pol-iv)}$$
follows easily from Lemma \ref{Lem:weak^*-closed faces} below. The implication
$$
\mathrm{(pol-v)}\Rightarrow\mathrm{(pol-vii)}
$$
is proved in Theorem 1 in \cite{Durier-Papini1993}. Finally the equivalence
$$\mathrm{(pol-vii)}\Leftrightarrow\mathrm{(pol-K)}$$
is proved in Theorem 4.3 in \cite{Casini-Miglierina-Piasecki-Vesely2016}.
\end{proof}
The previous proof needs the following lemma, that is interesting in itself as it gives a property of the $w^*$-closed faces of $B_{X^*}$. Indeed, this property is strictly related to the compact norm-preserving extension of compact operators (see \cite{Casini-Miglierina-Piasecki-Vesely2016}).
\begin{lem}\label{Lem:weak^*-closed faces}
Let $X$ be a Lindenstrauss space and let $F$ be a $w^*$-closed proper face of $B_{X^*}$. Then there exists $x\in S_X$ such that $F\subseteq D(x)$.
\end{lem}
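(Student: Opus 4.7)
The plan is to locate $x \in S_X$ that is constantly equal to $1$ on $F$; such an $x$ immediately yields $F \subseteq D(x)$. The argument falls into three steps, and the Lindenstrauss hypothesis is needed only for the last one.

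First I would verify that $F \subseteq S_{X^*}$. Assume toward contradiction that some $x^* \in F$ has $\|x^*\| < 1$, and pick $p \in B_{X^*} \setminus F$ (which exists since $F$ is proper). For $0 < \lambda \leq (1 - \|x^*\|)/2$ the point $q := (x^* - \lambda p)/(1-\lambda)$ still lies in $B_{X^*}$, and $x^* = \lambda p + (1-\lambda) q$ is a nontrivial convex combination of points of $B_{X^*}$; the face property then forces $p \in F$, a contradiction. As an immediate corollary, $-F$ is another $w^*$-closed proper face and $F \cap (-F) = \emptyset$, because $0 = \tfrac{1}{2} x^* + \tfrac{1}{2}(-x^*)$ cannot belong to $F \subseteq S_{X^*}$.

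The heart of the proof is then to produce $x \in B_X$ that takes the constant value $+1$ on $F$ (and, by the antipodal symmetry, $-1$ on $-F$). Any such $x$ has $\|x\| = 1$ automatically: $x^*(x) = 1$ together with $\|x^*\| = 1$ for any $x^* \in F$ forces $\|x\| \geq 1$, while $\|x\| \leq 1$ by construction. Hence $x \in S_X$ and $F \subseteq D(x)$, as desired. To produce such $x$, I would invoke an Edwards-type separation theorem available on preduals of $L_1$: in a Lindenstrauss space two disjoint $w^*$-closed faces of $B_{X^*}$ can be simultaneously exposed by a single element of $B_X$ taking prescribed constant values on each face. Applied to the pair $(F,-F)$ with target values $(+1,-1)$, this yields the required peaking functional.

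The main obstacle is concentrated precisely in this last extension step. A bare Hahn--Banach separation of the disjoint $w^*$-compact convex sets $F$ and $-F$ only produces some $x_0 \in X$ with $\inf_{x^* \in F} x^*(x_0) > 0$, which after rescaling peaks at a value $\geq 1$ on $F$ but need not be exactly $1$ throughout $F$. Upgrading \emph{strict} separation to \emph{constant-valued} separation is the characteristic feature of preduals of $L_1$ (equivalently, of the Choquet-simplex structure carried by a suitable symmetric reduction of $B_{X^*}$), and it is precisely here that the Lindenstrauss hypothesis is indispensable. I would execute this upgrade by appealing to Theorem 2.3 in \cite{Lazar-Lindenstrauss1971}, together with the simplex-geometric techniques developed in \cite{Casini-Miglierina-Piasecki-Vesely2016}.
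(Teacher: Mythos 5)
Your Steps 1 and 2 are correct and your instinct is right that Hahn--Banach separation cannot work and that the Lindenstrauss hypothesis must enter exactly at the final step. But that final step is where the entire content of the lemma lives, and your proposal assumes it rather than proves it: the ``Edwards-type separation theorem'' you invoke, applied to the antipodal pair $(F,-F)$ with values $(+1,-1)$ (where the value $-1$ on $-F$ is automatic by linearity, so the pair formulation adds nothing), is verbatim the assertion that some $x\in B_X$ satisfies $x^*(x)=1$ for all $x^*\in F$ --- which, by your own Step 2, is exactly the lemma. The citations offered do not close this circle. Theorem 2.3 of \cite{Lazar-Lindenstrauss1971} is a structural result about separable Lindenstrauss spaces with nonseparable duals; it is used in this very paper (proof of Theorem \ref{Theorem fpp}) to produce an infinite set $C\subset\mathrm{ext}\,B_{X^*}$ with $\overline{\mathrm{conv}(C)}^*\subset S_{X^*}$, and it contains no separation or exposure statement for faces; nor does \cite{Casini-Miglierina-Piasecki-Vesely2016} contain the simultaneous-exposure theorem you describe. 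Worse, in the generality you state it, the black box is false: prescribed constant values on two disjoint $w^*$-closed faces are not always achievable. In $\ell_1=c_0^*$ take $F_1=\mathrm{conv}\left(\left\lbrace e_1^*,e_2^*\right\rbrace\right)$ and $F_2=\left\lbrace -e_1^*\right\rbrace$: any $x\in B_{c_0}$ constant on $F_1$ has $x(1)=x(2)=a$, so its value on $F_2$ is forced to be $-a$. The only consistent instance of your ``theorem'' is the antipodal one, i.e.\ the lemma itself.

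The correct, citable upgrade mechanism --- the one the paper uses --- is Proposition 1 of \cite{Lazar1969}, a selection/extension theorem valid precisely for Lindenstrauss spaces: if $H\subset B_{X^*}$ is a symmetric $w^*$-compact convex set and $Y$ is a separable subspace of the space $A_0(H)$ of $w^*$-continuous affine symmetric functions on $H$, then there is an isometry $T:Y\rightarrow X$ with $h^*(T(y))=y(h^*)$ for all $h^*\in H$, $y\in Y$. The remaining work, which the paper carries out explicitly, is to construct the affine function to be realized: fix $\bar{x}^*\in F$, set $V=\mathrm{span}(\bar{x}^*-F)$ and $H=\mathrm{conv}\left(F\cup(-F)\right)$, check that each $h^*\in H$ lies in $\alpha\bar{x}^*+V$ for a unique $\alpha\in[-1,1]$, and verify that $\bar{y}(h^*):=\alpha$ is $w^*$-continuous, affine, symmetric, and identically $1$ on $F$; applying Lazar's proposition to $Y=\mathrm{span}\left(\{\bar{y}\}\right)$ gives $x=T(\bar{y})\in S_X$ with $F\subseteq D(x)$. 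Note that the extension happens only from $H$, not from all of $B_{X^*}$, which is what makes the construction tractable. Without this (or an equivalent, correctly referenced result) your Step 3 is a restatement of the goal, so the proposal has a genuine gap at its crux.
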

\begin{proof}
Let us fix an element $\bar{x}^*\in F$. Then we consider the subspace $V=\mathrm{span}(\bar{x}^*-F)$. It is easy to prove that for every $h^*\in H=\mathrm{conv}\left( F\cup(-F)\right)$ there exists a unique real number $\alpha \in [-1,1]$ such that
\[
h^* \in \alpha \bar{x}^*+V.
\]
Now, let $A_0(H)$ denotes the Banach space of all $w^*$-continuous affine symmetric (i.e., $f(-x)=-f(x)$) functions on $H$.  We introduce the function $\bar{y}:H\rightarrow \mathbb{R}$ defined by $\bar{y}(h^*)=\alpha$. It is easy to recognize that $\bar{y}\in A_0(H)$ and that $\bar{y}|_F=1$. By considering the separable subspace $Y=\mathrm{span}\left( \{\bar{y}\}\right) $ of $A_0(H)$, we can apply Proposition 1 in \cite{Lazar1969} to show that there exists an isometry $T:Y\rightarrow X$ such that $h^*(T(y))=y(h^*)$ for every $h^*\in H$ and for every $y \in Y$. Therefore $T(\bar{y})\in S_X$ and $F\subseteq D(T(\bar{y}))$.
\end{proof}

The equivalence among (pol-iv), (pol-v) and (pol-vi) allows us to clarify the situation about the existence of compact norm-preserving extension of a compact operator with values in a Lindenstrauss space.
From  \cite{Lindenstrauss 1964} it is known that a Lindenstrauss space $X$ is polyhedral if and only if for every Banach spaces $Y\subset Z$ and every operator $T:Y\rightarrow X$ with $\dim T(Y)\leq2$ there exists a compact extension $\tilde{T}:Z\rightarrow X$ with $\left\Vert \tilde{T}\right\Vert =\left\Vert T\right\Vert $ (combine Theorem 7.9 in \cite{Lindenstrauss 1964} and Theorem 4.7 in \cite{Klee 1959}). Moreover, Lazar provided (see Theorem 3 in \cite{Lazar1969})  the following more general extension property that he asserted to be equivalent to polyhedrality.

\begin{fact} \label{thm:Lazar}
\rm If $X$ is a
Lindenstrauss space, then the following properties are equivalent:

\begin{itemize}
\item [\rm {(1)}] $X$ is a polyhedral space;
\item [\rm {(2)}] $X$ does not contain an isometric copy of $c$;
\item [\rm {(3)}] there are no infinite-dimensional $w^{*}$- closed
proper faces of $B_{X^{*}}$;
\item [\rm{(4)}] for every Banach spaces $Y\subset Z$ and every
compact operator $T:Y\rightarrow X$ there exists a compact extension
$\tilde{T}:Z\rightarrow X$ with $\left\Vert \tilde{T}\right\Vert=
\left\Vert T\right\Vert $.
\end{itemize}
\end{fact}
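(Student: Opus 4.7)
The plan is to close a cycle of implications among (1)--(4), drawing on Theorem~\ref{Thm:relationships polyhedrality} and Lemma~\ref{Lem:weak^*-closed faces} for the geometric content, and on classical Lindenstrauss extension theory for the operator-theoretic statement (4). Concretely, I would aim for the route $(3)\Rightarrow(1)\Rightarrow(2)\Rightarrow(3)$ together with $(1)\Leftrightarrow(4)$.

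\textbf{Geometric equivalences.} The implication $(3)\Rightarrow(1)$ is essentially immediate from the paper: condition (3) is precisely (pol-iv), and Theorem~\ref{Thm:relationships polyhedrality} gives (pol-iv)$\Rightarrow$(pol-K)$=$(1). For $(1)\Rightarrow(2)$, I would invoke the classical fact that $c$ is not polyhedral in Klee's sense: one checks that a two-dimensional slice of $B_c$ around the constant-sequence direction is not a polytope, so a polyhedral $X$ cannot contain an isometric copy of $c$. For the contrapositive direction $(2)\Rightarrow(3)$, assume $B_{X^*}$ admits an infinite-dimensional $w^*$-closed proper face $F$. Lemma~\ref{Lem:weak^*-closed faces} supplies an $x\in S_X$ with $F\subseteq D(x)$. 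Using $X^*=L_1(\mu)$, I would then extract an infinite sequence of extreme points of $F$ whose behaviour under the duality mirrors the $c$-dual basis, and use this together with $x$ to build an isometric embedding $c\hookrightarrow X$.

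\textbf{Extension direction.} For $(1)\Leftrightarrow(4)$ I would rely on the Lindenstrauss extension machinery. The forward implication $(1)\Rightarrow(4)$ combines the (pol-K) characterization with the fact that compact operators into a Lindenstrauss space factor through finite-dimensional polyhedral subspaces, where norm-preserving compact extension is classical (Lindenstrauss 1964; compare the special case $\dim T(Y)\leq 2$ cited in the excerpt). The reverse $(4)\Rightarrow(1)$ would be obtained by applying the extension property to carefully chosen compact operators with two-dimensional range, designed so that failure of (pol-K) forces the failure of the norm-preserving compact extension of some such operator.

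\textbf{Main obstacle.} The decisive step is $(2)\Rightarrow(3)$: manufacturing an isometric (and not merely isomorphic) copy of $c$ out of an infinite-dimensional $w^*$-closed face of $B_{X^*}$. Merely choosing a sequence in $F$ that converges $w^*$ to a point of $F$ does not suffice — one needs the associated functionals to produce a subspace of $X$ on which the natural map to $c$ preserves norms. This forces one to exploit the $L_1$-structure of $X^*$ in order to select extreme points with mutually \emph{disjoint supports}, so that norms add exactly. This additivity is what upgrades an isomorphic embedding to an isometric one, and it is exactly the delicate point where the original Lazar argument needs repair; Theorem~\ref{Thm:relationships polyhedrality} and Lemma~\ref{Lem:weak^*-closed faces} are precisely the tools that enable this repair.
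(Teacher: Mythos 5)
Fact~\ref{thm:Lazar} is Lazar's original claim (Theorem~3 in \cite{Lazar1969}), and immediately after stating it the paper observes that \cite{Casini-Miglierina-Piasecki-Vesely2016} disproved the equivalences $(1)\Leftrightarrow(3)$ and $(1)\Leftrightarrow(4)$; the paper offers no proof of the Fact, but instead a corrected splitting into two separate groups: $(1)\Leftrightarrow(2)$ (Theorem~4.3 in \cite{Casini-Miglierina-Piasecki-Vesely2016}) and $(3)\Leftrightarrow(4)$ (Theorem~5.3 in \cite{Casini-Miglierina-Piasecki-Vesely2016} together with Theorem~\ref{thm:compact-polyhedrality}). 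Concretely, the counterexample is in the paper itself: Example~\ref{ex4}, the hyperplane $W_\alpha$ of $c$ with $\alpha=(\frac{1}{2},\frac{1}{4},\frac{1}{8},\ldots)$, satisfies (pol-vii), hence (pol-K), so it is polyhedral in Klee's sense and (by the valid equivalence $(1)\Leftrightarrow(2)$) contains no isometric copy of $c$; yet it fails (pol-iv), i.e., $B_{X^*}$ does have an infinite-dimensional $w^*$-closed proper face, and consequently, since $(3)\Leftrightarrow(4)$, it also fails the norm-preserving compact extension property (4).

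This kills two of your three proposed cycles. Your step $(2)\Rightarrow(3)$ --- which you correctly single out as the decisive difficulty --- is not merely delicate, it is impossible: no amount of disjoint-support bookkeeping in $L_1(\mu)$ can extract an isometric copy of $c$ from an infinite-dimensional $w^*$-closed face, because $W_\alpha$ above has such a face and no such copy. Your step $(1)\Rightarrow(4)$ fails for the same space; its sketched justification is also flawed on its own terms, since a compact operator into a Lindenstrauss space need not factor through finite-dimensional subspaces (this is precisely where Lazar's original argument broke, as the analysis in \cite{Casini-Miglierina-Piasecki-Vesely2016} shows). What survives of your plan: $(3)\Rightarrow(1)$ is correct via Theorem~\ref{Thm:relationships polyhedrality}, i.e., $\mathrm{(pol\text{-}iv)}\Rightarrow\mathrm{(pol\text{-}v)}\Rightarrow\mathrm{(pol\text{-}vii)}\Leftrightarrow\mathrm{(pol\text{-}K)}$, and $(1)\Rightarrow(2)$ is correct; but these one-sided implications cannot be closed into the asserted four-way equivalence. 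The repaired statement you should be aiming at is Theorem~\ref{thm:compact-polyhedrality}, where (3) and (4) are grouped with (pol-v) and (pol-vi) --- equivalently, finite-dimensionality of each $D(x)$ --- and polyhedrality of $X$ is strictly weaker.
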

However, in \cite{Casini-Miglierina-Piasecki-Vesely2016} it is shown that the equivalences between (1) and (4) and between (1) and (3) in Fact \ref{thm:Lazar} are false. Therefore, some of the considered implications remain unproven. 
On the other hand, a characterization of norm-preserving extendability of a compact operators is provided. Indeed, the following result holds.
\begin{thm}[Theorem 5.3 in \cite{Casini-Miglierina-Piasecki-Vesely2016}]
For an infinite-dimensional Banach space $X$, the following assertions
are equivalent.
\begin{enumerate}
\item $X$ is a Lindenstrauss space such that each $D(x)$ 
($x\in S_X$) is finite-dimensional.
\item For every Banach spaces $Y\subset Z$, every compact operator
$T\colon Y\to X$ admits a compact norm-preserving extension
$\tilde{T}\colon Z\to X$.
\end{enumerate}
\end{thm}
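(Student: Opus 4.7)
The plan is to establish the equivalence via the polyhedrality framework developed in Section \ref{sec:poly}, combined with Lindenstrauss's classical characterization of preduals of $L_1$.

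For the implication (2)$\Rightarrow$(1), I would first specialize (2) to operators $T\colon Y\to X$ whose range is finite-dimensional (such operators are automatically compact). The resulting extension property is Lindenstrauss's classical characterization of $\mathcal{L}_1$-preduals, hence $X$ is a Lindenstrauss space. To conclude that every $D(x)$ is finite-dimensional, I argue by contrapositive: assume $D(x_0)$ is infinite-dimensional for some $x_0\in S_X$. By Lemma \ref{Lem:weak^*-closed faces} this is equivalent to the existence of an infinite-dimensional $w^*$-closed proper face of $B_{X^*}$, so property (pol-iv) fails. I would then select a sequence $\{x_n^*\}$ of extreme points of $D(x_0)$ that is $w^*$-convergent to a non-extreme point of $D(x_0)$, and use this sequence to mimic the shift-type construction carried out in Step 3 of the proof of Theorem \ref{Theorem stable fpp}. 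This yields a concrete compact operator $T\colon Y\to X$, with $Y\subset Z$ suitably chosen, such that any norm-preserving extension $\tilde{T}\colon Z\to X$ must encode a shift on an isometric copy of $\ell_1$ sitting inside $Z^*$, and therefore cannot be compact.

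For the implication (1)$\Rightarrow$(2), let $T\colon Y\to X$ be a compact operator with $Y\subset Z$. Since $X$ is Lindenstrauss, the classical Lindenstrauss theorem provides, for every $\varepsilon>0$, an extension $\tilde{T}_\varepsilon\colon Z\to X$ with $\|\tilde{T}_\varepsilon\|\leq(1+\varepsilon)\|T\|$. To promote this to an exact norm-preserving compact extension, I would approximate $T$ in operator norm by finite-rank operators $T_n$, note that each $T_n$ admits a finite-rank (hence compact) exact norm-preserving extension $\tilde{T}_n\colon Z\to X$ by the finite-rank version of Lindenstrauss's theorem, and then exploit the finite-dimensionality of each $D(x)$ to select the sequence $\{\tilde{T}_n\}$ coherently. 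Concretely, one dualizes: the adjoints $\tilde{T}_n^*\colon X^*\to Z^*$ restrict to $T_n^*$ on $X^*$, and the finite-dimensionality of $D(x)$ controls the fibers of the restriction map $Z^*\to Y^*$ in such a way that one can force operator-norm convergence of $\{\tilde{T}_n^*\}$. Passing to the limit produces the required compact norm-preserving extension $\tilde{T}$.

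The main obstacle is the coherent selection of extensions in the direction (1)$\Rightarrow$(2): since norm-preserving extensions are not unique, the bare hypothesis that $X$ is Lindenstrauss is insufficient to upgrade a convergent sequence $\{T_n\}$ to a convergent sequence $\{\tilde{T}_n\}$. The finite-dimensionality of $D(x)$ is precisely the geometric rigidity that makes this coordination possible, supplying through duality the compactness needed to extract a convergent subsequence of extensions and stabilize the limit. In the converse direction (2)$\Rightarrow$(1), the hard part is engineering an explicit compact operator that witnesses the failure of compact norm-preserving extendability whenever $D(x_0)$ is infinite-dimensional, and the $\ell_1$-shift construction from Section \ref{Sec:Stablefpp} is the natural template to adapt.
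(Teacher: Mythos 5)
First, a framing remark: this paper does not prove the statement at all --- it is imported verbatim, with citation, as Theorem 5.3 of \cite{Casini-Miglierina-Piasecki-Vesely2016}, and is then combined with Theorem \ref{Thm:relationships polyhedrality} to produce Theorem \ref{thm:compact-polyhedrality}. So your attempt must stand on its own, and it has two genuine gaps.

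The gap in (1)$\Rightarrow$(2) is fatal as written. Your key input --- a ``finite-rank version of Lindenstrauss's theorem'' giving \emph{exact} norm-preserving finite-rank extensions $\tilde{T}_n$ of the approximants $T_n$ --- does not exist for a general Lindenstrauss space: as this very paper recalls (Theorem 7.9 of \cite{Lindenstrauss 1964} combined with Theorem 4.7 of \cite{Klee 1959}), the existence of compact norm-preserving extensions already for operators of rank at most $2$ is \emph{equivalent} to polyhedrality, and the Lindenstrauss space $c$ fails it. Classical Lindenstrauss theory only gives $(1+\varepsilon)$-extensions. Hence exact norm preservation at the finite-rank stage must itself be extracted from the hypothesis that each $D(x)$ is finite-dimensional; it is the substance of the theorem, not a black box you may invoke. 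Even granting such $\tilde{T}_n$, your limit passage is unsubstantiated: the sequence $\{\tilde{T}_n\}$ is merely norm-bounded, bounded sets of operators are not relatively compact in operator norm, and the assertion that finite-dimensionality of the $D(x)$ ``controls the fibers of the restriction map $Z^*\to Y^*$'' so as to force convergence is a hope, not an argument. The history recounted in Fact \ref{thm:Lazar} is exactly a warning about this step: Lazar's claimed proof in \cite{Lazar1969} of an implication of the same shape (a face condition implying norm-preserving compact extendability of all compact operators) was wrong, so the point you leave vague is precisely where the known difficulty sits.

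The direction (2)$\Rightarrow$(1) is partly sound --- deducing that $X$ is Lindenstrauss from the $(1+\varepsilon)$-extension characterization is fine --- but your contrapositive construction does not go through as sketched. For a general, possibly nonseparable $X$ you cannot extract a $w^*$-convergent \emph{sequence} from $\mathrm{ext}\,D(x_0)$, since $B_{X^*}$ need not be $w^*$-metrizable; and even when you can, there is no reason the $w^*$-limit of extreme points of the face is non-extreme. More importantly, Step 3 of Theorem \ref{Theorem stable fpp} is a device for producing a fixed-point-free nonexpansive affine self-map of a $w^*$-compact convex set in a renormed predual of $\ell_1$; it is not a template for manufacturing a compact operator every norm-preserving extension of which is noncompact, and you give no mechanism by which the shift would force noncompactness of \emph{all} extensions. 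What survives of your plan is the correct observation, via Lemma \ref{Lem:weak^*-closed faces}, that infinite-dimensionality of some $D(x_0)$ is equivalent (for Lindenstrauss spaces) to failure of (pol-iv); but turning that geometric failure into an explicit witness against property (2) is the nontrivial content of Theorem 5.3 of \cite{Casini-Miglierina-Piasecki-Vesely2016}, and your sketch does not supply it.
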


Since the set $D(x)$ is finite-dimensional if and only if $\mathrm{ext}\, D(x)$ is finite (see Remark 5.2 in \cite{Casini-Miglierina-Piasecki-Vesely2016}), Theorem \ref{Thm:relationships polyhedrality} gives a correct version of the result of Lazar quoted in Fact \ref{thm:Lazar}.

\begin{thm}\label{thm:compact-polyhedrality}
Let $X$ be an infinite-dimensional Banach space. The following assertions are equivalent:
\begin{enumerate}
\item $X$ is a Lindenstrauss space enjoying property (pol-iv);
\item $X$ is a Lindenstrauss space enjoying property (pol-v);
\item $X$ is a Lindenstrauss space enjoying property (pol-vi);
\item For every Banach spaces $Y\subset Z$, every compact operator
$T\colon Y\to X$ admits a compact norm-preserving extension
$\tilde{T}\colon Z\to X$.
\end{enumerate}
\end{thm}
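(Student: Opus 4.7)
The proof is essentially a bookkeeping argument that combines Theorem \ref{Thm:relationships polyhedrality} with the two results quoted immediately above the statement. The plan is to identify (pol-vi) with the finite-dimensionality of the duality sets $D(x)$ and then invoke the cited characterization of norm-preserving compact extensions.

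First I would observe that Theorem \ref{Thm:relationships polyhedrality} already supplies the equivalences (1)$\Leftrightarrow$(2)$\Leftrightarrow$(3): within the class of Lindenstrauss spaces the properties (pol-iv), (pol-v) and (pol-vi) all coincide, as the diagram in that theorem shows the two double arrows between them. Thus it only remains to connect (3) with (4).

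Next I would translate (pol-vi) into the language used in the cited Theorem 5.3. Property (pol-vi) says that $\mathrm{ext}\,D(x)$ is finite for every $x\in S_X$. By Remark 5.2 in \cite{Casini-Miglierina-Piasecki-Vesely2016} (explicitly recalled just before the statement), the set $D(x)$ is finite-dimensional if and only if $\mathrm{ext}\,D(x)$ is finite. Therefore (3) is exactly the assertion that $X$ is a Lindenstrauss space whose every duality set $D(x)$ ($x\in S_X$) is finite-dimensional.

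Finally I would apply Theorem 5.3 of \cite{Casini-Miglierina-Piasecki-Vesely2016}, which characterizes this class of spaces as precisely those for which every compact operator $T\colon Y\to X$ defined on a subspace $Y$ of an arbitrary Banach space $Z$ admits a compact norm-preserving extension $\tilde{T}\colon Z\to X$. This yields the equivalence (3)$\Leftrightarrow$(4) and completes the chain (1)$\Leftrightarrow$(2)$\Leftrightarrow$(3)$\Leftrightarrow$(4). There is no substantive obstacle here, since every step reduces to invoking an already-established result; the point of the theorem is purely to assemble them into the corrected version of Lazar's statement, replacing the faulty equivalence in Fact \ref{thm:Lazar} by the intrinsic geometric conditions (pol-iv)--(pol-vi).
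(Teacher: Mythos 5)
Your proposal is correct and matches the paper's own justification exactly: the equivalences (1)$\Leftrightarrow$(2)$\Leftrightarrow$(3) come from Theorem \ref{Thm:relationships polyhedrality}, and (3)$\Leftrightarrow$(4) follows by translating (pol-vi) into the finite-dimensionality of each $D(x)$ via Remark 5.2 of \cite{Casini-Miglierina-Piasecki-Vesely2016} and then applying Theorem 5.3 of that paper. No gap; this is the same bookkeeping argument the authors intend.
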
 

It is worth to pointing out that the original statement of Lazar (see Fact \ref{thm:Lazar}) has a correct version where two separate groups of equivalent properties are recognized. Namely, Theorem 4.3 in \cite{Casini-Miglierina-Piasecki-Vesely2016} shows that properties (1) and (2) are equivalent, while the equivalence between (3) and (4) follows from Theorem 5.3 in \cite{Casini-Miglierina-Piasecki-Vesely2016} and Theorem \ref{thm:compact-polyhedrality} above.

Now, we prove that none the one-side implications of Theorem \ref{Thm:relationships polyhedrality} can be reversed. In \cite{Durier-Papini1993,Fonf-Vesely2004} there are many examples proving that the considered implications cannot be reversed when a general Banach space is considered, but most of them are not Lindenstrauss spaces. On the other hand, the following examples show that the implications cannot be reversed even if we restrict our attention to the class of Lindenstrauss spaces. All of them are based on suitable hyperplanes of the space $c$ of the convergent sequences.
Let $\alpha=(\alpha(1),\alpha(2),\ldots)\in B_{\ell_1}$, we define the space
$$
W_\alpha=\left\lbrace x=(x(1),x(2),\ldots)\in c: \lim_{i \rightarrow \infty}x(i)=\sum_{i=1}^{+\infty}\alpha(i)x(i) \right\rbrace.
$$ 
A detailed study of this class of spaces was developed in \cite{Casini-Miglierina-Piasecki2014} and in Section 2  of \cite{Casini-Miglierina-Piasecki2015}. Here we recall only that $W_\alpha$ is a predual of $\ell_1$ and that the standard basis $\left\lbrace e^*_n\right\rbrace$ of $\ell_1$ is $\sigma(\ell_1,W_\alpha)$-convergent to $\alpha$ for every $\alpha \in B_{\ell_1}$.

\begin{exa}\label{ex1}[$\mathrm{(pol-ii)}\nRightarrow\mathrm{(pol-i)}$]
Let $\alpha=(\frac{r}{2},\frac{r}{2},0,0,\ldots)\in \ell_1$ for $0<r<1$. Then 
the standard basis $\left\lbrace e^*_n\right\rbrace$ is $\sigma(\ell_1,W_\alpha)$-convergent to 
$\alpha$.
\end{exa}

\begin{exa}\label{ex2}[$\mathrm{(pol-iii)}\nRightarrow\mathrm{(pol-ii)}$]
Let $\alpha=(-\frac{1}{2},-\frac{1}{4},-\frac{1}{8},\ldots)\in \ell_1$. It is easy to see that $W_\alpha$ satisfies (pol-iii), but the standard basis $\left\lbrace e^*_n\right\rbrace$ is $\sigma(\ell_1,W_\alpha)$-convergent to $\alpha$.
\end{exa}

\begin{exa}\label{ex3}[$\mathrm{(pol-iv)}\nRightarrow\mathrm{(pol-iii)}$]
Let $\alpha=(\frac{1}{2},-\frac{1}{4},\frac{1}{8},-\frac{1}{16},\ldots)\in \ell_1$.
By considering the set $C=\left\lbrace e^*_1,e^*_3,e^*_5,\ldots \right\rbrace$, it is easy to recognize that $W_\alpha$ fails property $\mathrm{(pol-iii)}$. However $W_\alpha$ satisfies (pol-iv).
\end{exa}

\begin{exa}\label{ex4}[$\mathrm{(pol-vii)}\nRightarrow\mathrm{(pol-iv)}$]
Let $\alpha=(\frac{1}{2},\frac{1}{4},\frac{1}{8},\ldots)\in \ell_1$. 
A detailed study of the properties of $W_\alpha$ is carried out in \cite{Casini-Miglierina-Piasecki-Vesely2016}. Here, we recall only that there we proved that $W_\alpha$ satisfies $\mathrm{(pol-vii)}$ (and hence it enjoys also $\mathrm{(pol-K)})$ but it lacks $\mathrm{(pol-iv)}$.
\end{exa}




\end{document}